\tikzset{
  midarrow/.style={
    postaction={
      decorate,
      decoration={
        markings,
        mark=at position 0.45 with {\arrow{>}}
      }
    }
  }
}
\newtheorem{theorem}{Theorem}[section]
\newtheorem{corollary}[theorem]{Corollary}
\newtheorem{problem}[theorem]{Problem}
\theoremstyle{definition} 
\newtheorem{definition}[theorem]{Definition}
\newtheorem{remark}[theorem]{Remark}
\newtheorem{example}[theorem]{Example}
\renewenvironment{proof}{{\noindent\bfseries Proof.}}{\qed} 
\definecolor{ThistleBlue}{rgb}{102, 153, 255}
\newcommand{\diag}{\operatorname{diag}}
\newcommand{\adj}{\operatorname{adj}}
\newcommand{\sign}{\operatorname{sign}}
\newcommand{\Gr}{\operatorname{Gr}}
\newcommand{\Id}{I}
\newcommand{\rowspan}{\operatorname{rowspan}}
\newcommand{\R}{\mathbb{R}}
\newcommand{\Ps}{\mathbb{P}}
\newcommand{\ba}{\mathbf{a}}
\newcommand{\bB}{\mathbf{B}}
\newcommand{\xstar}{\mathbf{x}_{\star}}
\newcommand{\Pl}{\operatorname{Pl}}
\newcommand{\baseset}{\binom{[2n]}{n}}
\newcommand{\GrR}{\Gr_{\mathbb{R}}}
\newcommand{\Rn}{\mathbb{R}^n}
\newcommand{\Rnn}{\mathbb{R}^{n\times n}}  
\newcommand{\Rtwon}{\mathbb{R}^{2n}}
\newcommand{\Rntwon}{\mathbb{R}^{n\times 2n}}  
\newcommand{\feascond}{\adj(\bB)\ba}          
\newcommand{\detB}{\det(\bB)}                 
\newcommand{\JacAB}{J_{\ba,\bB}}              
\newcommand{\Grnn}{\Gr(n,2n)}                 
\newcommand{\GrRnn}{\GrR(n,2n)}               
\renewcommand{\L}{\mathcal{L}}
\DeclareMathOperator{\Tr}{tr}
\DeclareMathAlphabet{\mathcal}{OMS}{cmsy}{m}{n}
\DeclareMathAlphabet{\mathcalbf}{OMS}{cmsy}{b}{n}
\DeclareMathAlphabet{\mathbfsfit}{\encodingdefault}{\sfdefault}{b}{it}
\DeclareMathAlphabet{\mathbfsf}{\encodingdefault}{\sfdefault}{b}{n}
\renewcommand{\vec}[1]{\boldsymbol{#1}}
\newenvironment{coordeq}
{\begin{equation}\begin{array}{@{}ll@{}}}
{\end{array}\end{equation}}
\newcommand{\coord}[1]{#1 &}
\renewcommand{\leq}{\leqslant}
\renewcommand{\geq}{\geqslant}
\title{{\bf Strata of Ecological Coexistence via Grassmannians}}
\author{\normalsize{Türkü Özlüm Çelik, Pierre A. Haas, Georgy Scholten, Kexin Wang, Giulio Zucal}}
\date{\small\today}
\begin{document}

\maketitle

\begin{abstract}
    We study the Lotka--Volterra system from the perspective of computational algebraic geometry, focusing on equilibria that are both feasible and stable. These conditions stratifies the parameter space in $\Rn \times \Rnn$ with the feasible-stable semialgebraic sets. We encode them on the real Grassmannian $\operatorname{Gr}_\mathbb{R}(n,2n)$ via a parameter matrix representation, and use oriented matroid theory to develop an algorithm, combining Grassmann--Plücker relations with branching under feasibility and stability constraints. This symbolic approach determines whether a given sign pattern in the parameter space $\Rn \times \Rnn$ admits a consistent extension to Plücker coordinates. As an application, we establish the impossibility of certain interaction networks, showing that the corresponding patterns admit no such extension satisfying feasibility and stability conditions, through an effective implementation. We complement these results using numerical nonlinear algebra with \texttt{HypersurfaceRegions.jl} to decompose the parameter space and detect rare feasible-stable sign patterns. 
\end{abstract}


\section{Introduction}
Let $n$ be a positive integer, $\ba \in \Rn$ a real vector, and $\bB \in \Rnn$ a real matrix. The generalized {Lotka--Volterra} system~\cite{murray,Hofbauer1998,Pierre} for the time-varying vector $\mathbf{x}\in\Rn$ is defined by the system of ordinary differential equations (ODEs):
\begin{equation}\label{eq:LotkaVolterra}
    \dot{\mathbf{x}} = \diag(\mathbf{x})(\ba - \bB\mathbf{x}),
\end{equation}
where $\operatorname{diag}(\mathbf{x})$ denotes the diagonal matrix with the components of $\mathbf{x}$ on the diagonal. In components, we write $\ba=(a_i)$, $\bB=(b_{ij})$, $\mathbf{x}=(x_i)$, so that \eqref{eq:LotkaVolterra} becomes 
\begin{equation}\label{eq:LotkaVolterraComponent}
    \dot{x}_i = x_i \left(a_i - \sum_{j=1}^n b_{ij} x_j\right), \quad \text{for } i = 1, \dots, n.
\end{equation}
At a \emph{steady state} of the system, $ \dot{\mathbf{x}}={\bf 0}$. If $\bB$ is nonsingular, the Lotka--Volterra system has a steady state,
\begin{equation}\label{eq:steadyLotka}
\xstar = \bB^{-1} \ba.
\end{equation}
Of particular interest to us will be steady states with all coordinates positive, i.e., steady states of \emph{coexistence}. Borrowing ecological parlance, we term these states \emph{feasible}~\cite{roberts74,Pierre}. The steady state in \eqref{eq:steadyLotka} is the only possible steady state of coexistence in the Lotka--Volterra system \cite{Pierre}. Beyond feasibility, we study \emph{stability}: a feasible equilibrium is (locally asymptotically) stable if small perturbations decay over time, which occurs exactly when all eigenvalues of the Jacobian at $\xstar$ have negative real parts \cite{allen2007introduction}.
Explicitly, this Jacobian is
\begin{equation}\label{eq:Jac}
J_{\ba,\bB}(\xstar) = -\mathrm{diag}(\xstar)\bB.
\end{equation}
A necessary condition for the equilibrium to be stable is $(-1)^n\mathrm{det}({J}_{\ba,\bB})>0$. This is a straightforward consequence of the Routh--Hurwitz conditions for stability~\cite{murray}. Now feasibility requires $\mathrm{det}(\mathrm{diag}(\xstar))>0$, so a necessary condition for stability and feasibility is ${\det(\bB)>0}$. Writing $\bB^{-1}$ in \eqref{eq:steadyLotka} in terms of the adjugate matrix $\mathrm{adj}(\bB)$, we can therefore express the conditions of feasibility and stability equivalently using the normalized forms of $\xstar$ and $J_{\ba,\bB}$,
\begin{align}\label{eq:steadyLotkaadjugate}
\tilde{\mathbf{x}}_\star := \mathrm{adj}(\bB)\ba,&&\tilde{J}_{\ba,\bB}(\tilde{\mathbf{x}}_\star)=-\mathrm{diag}(\tilde{\mathbf{x}}_\star)\bB.
\end{align}

In theoretical ecology, the Lotka--Volterra system is the simplest model for the dynamics of a vector $\mathbf{x}$ of abundances of $n$ species~\cite{murray, Hofbauer1998}. In this context, $\ba$ represents the growth rates of these species in the absence of interactions, which are encoded by $\bB$. In more detail, $a_i>0$ for a species that grows on its own, but $a_i<0$ for a species that dies in the absence of other species. The diagonal elements $b_{ii}$ describe intraspecies interactions, and it is usual to assume $b_{ii}>0$ to impose a notion of carrying capacity. Different types of interspecies interactions are defined by $b_{ij}$ ($i\neq j$): the interaction between species $i$ and $j$ is \emph{competitive} if $b_{ij},b_{ji}>0$; it is \emph{mutualistic} if $b_{ij},b_{ji}<0$; and it is a \emph{directed predator-prey interaction}, with species $i$ predating on species $j$, if $b_{ij}<0$, $b_{ji}>0$.

Coexistence of species in the Lotka--Volterra system corresponds to the existence of a stable and feasible steady state $\xstar$. It is straightforward to decide, numerically, whether $\xstar$ is stable and feasible for given $\ba$ and $\bB$, but theoretical ecology ultimately seeks to describe the general principles that determine the possibility of stable and feasible coexistence. In particular, one asks: How does the possibility of stable and feasible coexistence depend on the network of types of ecological interactions, i.e., on the sign patterns of $\ba$ and $\bB$?

The explicit conditions for feasibility and stability are too complex, however, to allow immediate insights into this question beyond the textbook case $n=2$~\cite{murray}. Much recent work in theoretical ecology, exemplified by~\cite{allesina12,mougi12,coyte15,butler18,hatton24}, has therefore followed the approach pioneered by Robert May~\cite{may72}, of studying large ecological communities statistically, using results from random matrix theory~\cite{allesina15}. This approach has revealed, for example, the generic effects of competition, mutualism, and predation on stability and feasibility of coexistence, but it cannot capture the full network structure of ecological interactions. Very recent work~\cite{Pierre} has therefore taken a complementary approach, of exhaustive analysis of all networks of types of ecological interactions for small communities with $2\leq n\leq 5$. This revealed that the probability of stable and feasible coexistence depends hugely on the structure of this network. In particular, this work discovered that a very small subset of these networks are \emph{impossible ecologies} in which stable and feasible coexistence is nontrivially impossible~\cite{Pierre}, but the identification of impossible ecologies with $n>3$ remained a numerical conjecture.


Here, we therefore bring the perspective of computational and real algebraic geometry~\cite{michalek2021invitation} to the study of the feasible~\eqref{eq:steadyLotka} and stable~\eqref{eq:Jac} equilibria in the parameter space $(\ba,\bB) \in \Rn \times \Rnn$ of the Lotka--Volterra system~\ref{eq:LotkaVolterra}. We show how the requirements of feasibility and stability translate into a collection of polynomial inequalities in the entries of $\ba$ and $\bB$, so that the regions of parameter space corresponding to feasible-stable equilibria are \emph{semialgebraic sets} in $\mathbb{R}^{n+n^2}$, the \emph{feasible-stable strata of ecological coexistence}. The boundaries of these sets encode natural degeneracies such as equilibria with vanishing coordinates or marginally stable dynamics with eigenvalues on the imaginary axis. Our primary focus is on understanding the structure of these semialgebraic sets, in particular to identify those sign patterns for $(\ba,\bB)$ that admit no feasible-stable equilibrium, which are precisely the impossible ecologies of~\cite{Pierre}.

To address this question, we recast the problem in an algebraic-combinatorial framework on the real Grassmannian $\mathrm{Gr}_{\mathbb{R}}(n,2n)$. Through the matrix representation\linebreak ${[\mathrm{diag}(\ba) \mid \bB]}$, feasibility and stability conditions become sign constraints on the Plücker coordinates, which can be analyzed via oriented matroid theory \cite{oriented_mat}. 
This reformulation allows us to replace parts of the semialgebraic feasibility-stability problem by purely combinatorial constraints derived from the Grassmann--Plücker relations, providing a systematic route to proving impossibility results. 
The Grassmannian perspective also lends itself to effective computation: we develop a search algorithm, combining propagation via Grassmann--Plücker relations with branching steps constrained by ecological feasibility and stability checks. This approach not only certifies impossibility for certain sign patterns, but also organizes the feasible cases into strata indexed by their oriented matroid type, giving a combinatorial stratification of the parameter space that mirrors its semialgebraic decomposition.

\begin{example}[$n=2$]\label{ex:n2}
As a motivating example, we consider the two-species Lotka--Volterra system. Feasibility corresponds to $\xstar > 0$, or equivalently $\feascond > 0$, and stability requires both $\Tr(-\mathrm{diag}(\tilde{\mathbf{x}}_\star)\bB) >0$ and $\detB > 0$. For instance, with
\[
\bB = \begin{bmatrix} b_{11} & b_{12} \\ b_{21} & b_{22} \end{bmatrix}, \quad 
\ba = \begin{bmatrix} a_1 \\ a_2 \end{bmatrix}, \quad 
\feascond = 
\begin{bmatrix}
b_{22}a_1 - b_{12}a_2 \\
- b_{21}a_1 + b_{11}a_2
\end{bmatrix},
\]
feasibility and stability translate into explicit polynomial inequalities. Embedding the system into the Grassmannian $\Gr(2,4)$ via the $2\times 4$ matrix representation $M=[\diag(\ba)|\bB]$,
we recover the entries of $\diag(\ba)\bB$ and $\detB$ as Plücker coordinates, i.e., two-by-two minors of $M$: 
\begin{equation*}
p_{13} = a_1 b_{21}, \quad p_{14} = a_1 b_{22}, \quad p_{23} = -a_2b_{11}, \quad p_{24} = -a_2b_{12}, \quad p_{34} = \detB,
\end{equation*}
and feasibility becomes the inequalities 
$p_{14} + p_{24} > 0$ and  $-p_{13} - p_{23} > 0$. Each possible combination of signs in the off-diagonal entries of $\bB$ and $\ba$ corresponds to a total of 10 different networks~\cite{Pierre} of types of ecological interactions (i.e., competition, mutualism, predation), four of which are incompatible with feasibility. 
For instance, under \emph{obligate mutualism} ($a_i < 0$, $b_{ij} < 0$ for $i \ne j$), $p_{14}<0,p_{24}<0$, so the feasibility inequalities cannot hold, making this network impossible.
\end{example}

In Section~\ref{sec:RouthHurwitz}, we show that the feasibility and stability conditions of the Lotka–Volterra system define a semialgebraic set, which we refer to as the \emph{feasible–stable stratum} (Definition~\ref{def:FSstratum}), defined by polynomial inequalities in the input parameters $(\ba,\bB) \in \Rn\times \Rnn$. Section~\ref{sec:geometry} introduces the \emph{ecological Grassmannian}, where we propose the real Grassmannian $\GrRnn$ as an algebraic framework for studying these semialgebraic constraints. 
In Proposition~\ref{prop:toalgebraicGrassmannian}, we reformulate the feasible-stable stratification over the Grassmannian, and in Corollary~\ref{thm:comb_grass_gs} we present a purely combinatorial relaxation of this reformulation in terms of sign patterns on the input parameters. In Section~\ref{sec:computation}, we develop algorithms that exploit the real Grassmannian and its oriented matroid stratification to explore the feasible-stable semialgebraic sets efficiently, and we describe their implementation in \textsc{Maple}~\cite{gitlab} based on our findings in Section~\ref{sec:geometry}. Section~\ref{Sec:Experim} reports the results of our computational experiments. We answer questions left open in~\cite{Pierre} in Theorem~\ref{thm:impossible4}, which establishes the impossibility of certain ecological networks with $n=4$ species, as illustrated in Figure~\ref{fig:n4impossible}. Finally, Section~\ref{Sec:HypersurfacesComp} uses tools from numerical algebraic geometry to study the feasible-stable parameter space via hypersurface arrangements, yielding explicit region counts, representative feasible–stable points, and combinatorial statistics on sign patterns.

\section{Feasible Stable Ecologies as Semialgebraic Sets:\\Routh--Hurwitz Polynomials}
\label{sec:RouthHurwitz}


The condition that all eigenvalues of an $n \times n$ matrix $M$ have negative real parts can be reformulated as positivity conditions on the coefficients of its characteristic polynomial. This formulation, known as \emph{Routh--Hurwitz criterion}, goes back to the classical works of Routh (1877) and Hurwitz (1895). For modern treatments and applications, 
we refer the reader to~\cite{allen2007introduction,FULLER196871,murray}. We adopt this criterion as our point of departure. Let the characteristic polynomial of $M$ be given by
\begin{align}\label{eq:charpoly}
  p_M(\lambda) &= \det( \lambda \Id - M) = \lambda^n + c_{n-1}\lambda^{n-1} + \ldots +c_1 \lambda+  c_0.
\end{align}
The Hurwitz polynomial of $M$ of degree $k$ (for $k = 1, \dots, n$) is defined as the determinant of the following $k\times k$ matrix, where $c_k=0$ if $k<0$:
    \begin{equation}\label{eq:Hurwitzdeterminant}
\begin{pmatrix}
c_{n-1} & c_{n-3} & c_{n-5} & \cdots & c_{n-(2k-1)} \\
1     & c_{n-2} & c_{n-4} & \cdots & c_{n-(2k-2)} \\
0       & c_{n-1} & c_{n-3} & \cdots & \vdots \\
0       & 1     & c_{n-2} & \cdots & \vdots \\
\vdots  & \vdots  & \vdots  & \ddots & \vdots \\
0       & 0       & 0       & \cdots & c_{n-k}
\end{pmatrix}.
\end{equation}

\begin{theorem}[Routh--Hurwitz criterion]\label{thm:Routh}
The polynomial in~\eqref{eq:charpoly} has all roots with negative real part if and only if all the Hurwitz polynomials~\eqref{eq:Hurwitzdeterminant} are positive.
\end{theorem}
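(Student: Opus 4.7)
The plan is to reduce to the Hermite--Biehler theorem on interlacing of roots of the even and odd parts of $p_M$, then identify the Hurwitz determinants as the minors of a Bezout-type matrix that detects this interlacing.

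First I would dispatch the easy necessity of $c_0,\ldots,c_{n-1}>0$: grouping the roots of a Hurwitz polynomial into real negatives and complex conjugate pairs with negative real part, each resulting linear factor $(\lambda+r)$ and each quadratic factor $(\lambda^2 + 2\alpha\lambda + \alpha^2+\beta^2)$ has strictly positive coefficients, and positivity of coefficients is preserved under multiplication. In particular, the first Hurwitz determinant $H_1 = c_{n-1}$ is automatically positive, and the last one reduces to $c_0 \cdot H_{n-1}$ by cofactor expansion, so it suffices to control the intermediate determinants.

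Second, I would split $p_M(\lambda) = h(\lambda^2) + \lambda g(\lambda^2)$ into even and odd parts and evaluate on the imaginary axis, writing $p_M(i\omega) = h(-\omega^2) + i\omega g(-\omega^2)$. The Hermite--Biehler theorem asserts that $p_M$ is Hurwitz if and only if $h(-\omega^2)$ and $\omega g(-\omega^2)$ have only real simple roots that strictly interlace (with compatible leading signs). This is essentially an argument-principle computation: the hodograph $\omega\mapsto p_M(i\omega)$ must execute $n$ quarter-turns counter-clockwise as $\omega$ traverses $\mathbb{R}$ without touching the origin, which forces the interlacing.

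Third, I would identify the Hurwitz determinants of $p_M$ with the leading principal minors of the Bezout (equivalently Hankel) matrix of the pair $(h, g)$. Jacobi's signature criterion, combined with the classical Hermite--Hurwitz correspondence between the signature of a Hankel matrix and the Cauchy index of $g/h$, asserts that this Bezout matrix is positive definite if and only if the interlacing above is maximal; its leading principal minors then rearrange combinatorially into the zig-zag determinants in~\eqref{eq:Hurwitzdeterminant}. Combining these three steps closes the equivalence.

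The main obstacle is the bookkeeping in the third step, which is essentially Orlando's formula or an equivalent Sylvester identity: one must match abstract principal minors of a Bezout/Hankel matrix with the concrete determinants~\eqref{eq:Hurwitzdeterminant}. An alternative that sidesteps this identification is an inductive proof via Routh's algorithm, reducing $p_M$ to a polynomial $\tilde p$ of degree $n-1$ whose Hurwitz determinants appear as the tail of those of $p_M$ up to positive multiplicative factors, together with a homotopy argument preventing roots from crossing the imaginary axis without some intermediate Hurwitz determinant vanishing. Given the classical nature of the theorem, a fully self-contained proof is lengthy, and I would refer the reader to~\cite{FULLER196871,allen2007introduction} for the details.
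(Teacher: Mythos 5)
The paper does not actually prove this statement: Theorem~\ref{thm:Routh} is stated as the classical Routh--Hurwitz criterion, attributed to Routh (1877) and Hurwitz (1895), with the reader referred to~\cite{allen2007introduction,FULLER196871,murray} for treatments and to~\cite{FULLER196871} for alternative formulations. So there is no in-paper argument to compare yours against. Taken on its own terms, your outline follows the standard classical route --- necessity of positive coefficients via factorization into $(\lambda+r)$ and $(\lambda^2+2\alpha\lambda+\alpha^2+\beta^2)$ factors, the Hermite--Biehler interlacing characterization of Hurwitz polynomials via the even/odd split $p_M(\lambda)=h(\lambda^2)+\lambda g(\lambda^2)$, and the Hermite--Hurwitz identification of the Hurwitz determinants with the leading principal minors of the Bezout/Hankel matrix of $(h,g)$ --- and the small explicit claims you make are correct (e.g., $H_1=c_{n-1}$, and $H_n=c_0H_{n-1}$ by expanding along the last column, whose entries $c_{n-(2k-1)}$ vanish above the diagonal under the convention $c_k=0$ for $k<0$). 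The one caveat is that your third step, where all the real content lives, is only named rather than carried out: matching the zig-zag determinants~\eqref{eq:Hurwitzdeterminant} to the Bezout minors (Orlando's formula or the Cauchy-index computation) is precisely the part that cannot be waved through, and your alternative via Routh's reduction scheme has the same character. Since you explicitly defer to~\cite{FULLER196871,allen2007introduction} at that point, your treatment ends up at the same level of rigor as the paper's own, which is to cite the classical literature; as a self-contained proof it is incomplete, but as a justification of the theorem as used in the paper it is adequate and consistent with the authors' intent.
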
 

Alternative formulations of this criterion can be found in~\cite{FULLER196871}.
Indeed, the stability condition can be expressed in several equivalent ways: for example, in addition to the positivity of the Hurwitz determinants, one may require that all coefficients $c_i$ of the characteristic polynomial be 
positive. In particular, a necessary condition for stability is $0<c_0=p_M(0)=(-1)^n\det(M)$. In our setting, we may choose among these formulations to suit the algebraic manipulations and computational tools applied in later sections.

From~\eqref{eq:charpoly}, it follows that the coefficients $c_i$ are polynomials in terms of the entries of the matrix $M$.
For a given $(\ba,\bB) \in \Rn\times \Rnn$, we
take $M$ to be the Jacobian matrix $\JacAB(\textbf{x}_{\star})$ in~\eqref{eq:Jac} evaluated at the steady state $\textbf{x}_{\star}=\bB^{-1} \ba$. 
Although these constraints are rational in $(\ba,\bB)$, we can use the interdependence between stability and feasibility to pass to a semialgebraic formulation of the constraints, as noted above around Equation~\eqref{eq:Jac}: with $\det(\bB)>0$, feasibility $\xstar>0$ is equivalent to $\tilde{\mathbf{x}}_\star>0$, and $\JacAB$ is Hurwitz-stable if and only if $\tilde{J}_{\ba,\bB}$ is.
Thus our goal is to study the parameter space $\Rn\times \Rnn$ for ecologies via polynomial positivity conditions in $a_i$, $b_{ij}$ and $c_i(\ba,\bB)$ using tools from computational algebraic geometry. 
With this in place, we define the following semialgebraic set as the geometric object corresponding to the feasible-stable ecologies $(\ba,\bB) \in \Rn\times \Rnn$. 


\begin{definition}[Feasible-stable stratum]\label{def:FSstratum}
Fix an ordered sign pattern
\[
   \sigma=(\sigma_1,\dots, \sigma_n,\sigma_{11}, \sigma_{12}, \dots , \sigma_{nn}) 
      \;\in\; \{\pm\}^{\,n + n^2} .
\]
The \emph{feasible-stable stratum} associated to~$\sigma$ is the semialgebraic set
\begin{equation}\label{eq:feas_stable_stratum}
   \mathcal{S}_n(\sigma) \;=\;
   \left\{\,(\mathbf a,\bB)\in\Rn\!\times\Rnn \;\Bigg|\;
   \begin{aligned}
      &\sign(\ba,\bB)=\sigma, \quad \adj(\bB)\,\ba > \mathbf 0,\\[2pt]
      &\text{all the Hurwitz polynomials of } \tilde{J}_{\ba,\bB} > 0  
   \end{aligned}
   \right\}, 
\end{equation}
where $\tilde{J}_{\ba,\bB}$ is defined in~\eqref{eq:Jac}. 
Throughout, the polynomial constraints in $(\ba,\bB)$ ensuring $\adj(\bB)\ba>0$ are called the \emph{feasibility conditions}.
The polynomial constraints induced by the Routh--Hurwitz criterion are called the \emph{stability (Hurwitz) conditions}.
\end{definition}

In Definition~\ref{def:FSstratum}, we impose strict inequalities in the feasibility and stability conditions, so that each $\mathcal{S}_n(\sigma)$ is an open basic semialgebraic subset of $\Rn \times \Rnn$.
From the lens of real algebraic geometry, it would be more natural to define a closed analogue by replacing the strict inequalities with non-strict ones, thereby including boundary points where one or more feasibility or stability polynomials vanish.
Such boundary cases correspond to degenerate steady states (in which some coordinates of $\xstar$ are equal to zero) or to marginally stable dynamics (for which the Jacobian has eigenvalues on the imaginary axis).
Since our aim is to characterize strictly positive equilibria that are asymptotically stable, we restrict attention to the open formulation and exclude these limiting configurations.


\begin{definition}[Feasible-stable stratification]\label{def:stratification}
The collection of strata $\bigl\{\mathcal{S}_{n}(\sigma)\bigr\}_{\sigma}$
provides a pairwise disjoint, locally closed partition of the subset of the parameter space
$\Rn\times\Rnn$ in which the feasibility and stability conditions are satisfied.
We call this decomposition the \emph{feasible-stable stratification} of
the Lotka--Volterra parameter space. Each stratum is labeled by its
sign pattern~$\sigma$ and records precisely those parameters that realize
a feasible and asymptotically stable equilibrium contained in the orthant of $\Rn\times \Rnn$ designated by $\sigma$. The \emph{coexistence locus} in parameter space is the union of all
\emph{nonempty} feasible-stable strata. A Lotka--Volterra network with sign pattern~$\sigma$ is called
\emph{impossible} if and only if its feasible-stable stratum is empty,
\(\mathcal{S}_{N}(\sigma)=\varnothing\).
Thus, no choice of parameters with the prescribed signs yields a strictly
positive, asymptotically stable equilibrium.
\end{definition}

\begin{problem}[Realizability]\label{prob:feasible_ecology}
  Given a sign pattern $\sigma \in \{\pm\}^{\,n + n^2}$, decide whether the feasible-stable stratum $\mathcal{S}_{n}(\sigma)$ is empty. This is precisely the impossible-ecology problem studied in~\cite{Pierre}.
\end{problem}

To illustrate the semialgebraic formulation, we examine the following example.

\begin{example}[$n = 3$ and the impossibility of obligate mutualism]\label{ex:n_3}
We consider \emph{obligate mutualism} \cite{Pierre} of $n=3$ species, corresponding to the sign pattern $\sigma$ with $\sigma_i=-$ for all $i$, $\sigma_{ii}=+$, and $\sigma_{ij}=-$ for $i\neq j$. All off-diagonal interactions are mutualistic and all intrinsic growth rates are negative. The feasibility inequality $\feascond>0$ under this $\sigma$ forces all $2\times 2$ principal minors of $\bB$ to be negative. This contradicts the Hurwitz stability condition $c_1>0$, which follows from the expression of $c_1$ in terms of all principal minors. Hence $\mathcal{S}_3(\sigma)=\varnothing$, so this network is an impossible ecology.
\end{example}

We conclude the section by relating our framework to a broader context. Population dynamics have been studied not only in ecology but also in the context of game theory~\cite{Hofbauer1998,Zeeman}, where similar mathematical structures arise. 
In particular, the equations governing steady states in the Lotka–Volterra system share the same algebraic form as equilibrium conditions in game theory 
\cite[Equation (2.3)]{AboEtAl2025VectorBundle}.
In both cases, the defining relations are polynomial constraints involving an interplay matrix, viz., the payoff matrix in game theory and the interaction matrix in ecology.
From this viewpoint, the ecological steady-state condition can be interpreted as an analogue of the best-response equilibrium condition, suggesting that computational algebraic geometry methods developed for game-theoretic equilibria may be applied to feasibility problems in ecological models. 
A similar connection exists with other disciplines: the Lotka--Volterra system has actually been studied in the contexts of chemical reaction networks~\cite{Dickenstein} and integrable systems~\cite{BOGOYAVLENSKY198834, WeiGengZeng2019Bogoyavlensky}, both of which employ algebraic and geometric techniques central to nonlinear algebra. Formulating these connections precisely, within the perspective of nonlinear algebra, is a promising avenue for future research.

\section{Feasible Stable Ecologies via Grassmannians}
\label{sec:geometry}

In this section, we map the parameters $(\ba,\bB)\in \Rn \times \Rnn$ into the Grassmannian $\GrRnn$, the set of all $n$-dimensional linear subspaces of the vector space $\mathbb{R}^{2n}$. We study the feasible-stable strata $\mathcal{S}_n(\sigma)$ of the Lotka--Volterra system~\eqref{eq:LotkaVolterra} via the algebra and geometry of the Grassmannian. Throughout this section, we omit the subscript $\mathbb{R}$, assuming that we are working only over $\mathbb{R}$. We refer the reader to \cite[Chapter 5]{michalek2021invitation} for a perspective on Grassmannians and projective spaces from nonlinear algebra. 
We first outline the construction of the map from the parameters $(\ba,\bB)$ to the real Grassmannian and defer the algebraic details to later in the section. 
The following construction defines the pipeline of our work:
\begin{equation}
\label{eq:parameter_pipeline}
(\ba,\bB) \longmapsto \L = \rowspan(M) 
\longmapsto p_{\ba,\bB} = \Pl(\L),
\end{equation}
where $\Pl$ denotes the \emph{Pl\"ucker embedding} of the Grassmannian into projective space and the parameters $(\ba,\bB)$ are encoded into the block matrix
\begin{equation}
   \label{eq:param_matrix}
   M = 
   \begin{bmatrix}
       a_1  & \cdots & 0   & b_{11}  & \cdots & b_{1n} \\
       \vdots &  \ddots & \vdots  & \vdots & \ddots & \vdots \\
       0 & \cdots & a_n & b_{n1}  & \cdots & b_{nn}
   \end{bmatrix}
   \in \Rntwon.
\end{equation}
We will denote by $M_S$ the square matrix obtained by selecting the columns indexed by the ordered set $S$. (Hereinafter, we fix the ordering of the elements in set notation because of the alternating sign properties of the determinant.) We start by observing that positivity of the steady-state vector $\xstar = \bB^{-1}\ba$ can be expressed in terms of the minors of $M$, 
\begin{equation}
\label{eq:xstar_global}
(\xstar)_i = (-1)^{i+1} \sum_{j=1}^n \det(M_{j \cup S_i})/\det(M_{[n+1,\dots,2n]}),
\end{equation}
where $S_i=[n+1,\ldots,2n]\setminus\{n+i\}$. From this observation, it is natural to attempt to reformulate Problem~\ref{prob:feasible_ecology} in terms of the $n \times n$ minors of $M$. 

Since linear row operations on $M$ scale all $n\times n$ minors by a common nonzero factor, we work projectively: we identify vectors of minors up to $v\sim\lambda v$ ($\lambda\neq0$). The $n$-dimensional subspace $\L = \rowspan(M) \subset \Rtwon$ can be uniquely determined by the vector of minors. The Pl\"ucker embedding is the map
$\Pl\colon\Grnn\hookrightarrow\Ps^{\binom{2n}{n}-1}$ into the real \emph{projective space} of dimension ${\binom{2n}{n}-1}$ which takes $\L \in \Gr(n,2n)$ to its \emph{Pl\"ucker coordinates} $p\coloneq(p_S)=\left(\det(M_S)\mid{S\in\baseset}\right)$, wherein $\baseset$ is the set of subsets of $[1,2,\dots,2n]$ of size~$n$. The image $\Pl(\Grnn)$ of the Pl\"ucker embedding is the zero locus of the classical \emph{Grassmann–Plücker relations}, which are homogeneous quadratic relations on the Plücker coordinates.  

Using~\eqref{eq:xstar_global}, the \emph{feasibility} of $\xstar$ can be expressed in Pl\"ucker coordinates as
\begin{equation}
\label{eq:feasibility_global}
(\xstar)_i=(-1)^{i+1} \sum_{j=1}^n \frac{p_{j \cup S_i}}{{p_{[n+1,\dots,2n]}}} > 0
\quad \text{for all } i = 1,\ldots,n. 
\end{equation}
The \emph{stability} inequalities from Theorem~\ref{thm:Routh} can translate into Pl\"ucker coordinates but require additional information from the vector $\ba$. 

We recall from~\eqref{eq:Jac} that $\JacAB(\xstar)$ is the Jacobian at the steady state $\xstar$ of system~\eqref{eq:LotkaVolterra}. 
Its characteristic polynomial is
\begin{align}\label{eq:charpolyJacobian}
    P_{\JacAB(\xstar)}(\lambda) &= \det\bigl(\lambda \Id - \JacAB(\xstar)\bigr) = \sum_{i=0}^{n} c_i\lambda^i,
\end{align}
with coefficients 
\begin{align}\label{eq:charpolyCoeff}
    c_i = \sum_{\substack{I \subseteq [n]\\ |I|=i}} \prod_{j\in I^\text{c}} (\xstar)_j \det(\bB_{I^{\text{c}}}),
\end{align}
where $I^{\text{c}} = [n]\setminus I$ and $\bB_{I^{\text{c}}}$ is obtained from $\bB$ by deleting the rows and columns indexed by $I$. 
For a subset $I \subseteq [n]$, we denote $\pi(I)$ as the inversion number of the set $[I^c, I]$, i.e., the minimal number of pairwise swaps required to sort it in ascending order. Then 
\begin{equation}\label{eq:pluckerMinorRelation}
p_{[I^{\text{c}},n+I]} = (-1)^{\pi(I)} \prod_{j\in I^{\text{c}}} a_j \det(\bB_{I^{\text{c}}}).
\end{equation}
Combining \eqref{eq:pluckerMinorRelation} and \eqref{eq:charpolyCoeff}, we obtain the expression 
\begin{equation}\label{eq:stability_plucker_global}
    c_{i} = \sum_{\substack{I \subseteq [n]\\ |I|=i}} (-1)^{\pi(I)} \prod_{j\in I} a_j \prod_{k \in {I^{\text{c}}}}(\xstar)_k \cdot \frac{p_{[I^{\text{c}},n+I]}}{p_{[1,\dots, n]}}.
\end{equation}
Thus from \eqref{eq:stability_plucker_global} and \eqref{eq:feasibility_global}, the stability conditions in Definition~\ref{def:FSstratum} can be expressed in terms of the Pl\"ucker coordinates and $\ba$.
The condition $c_0>0$ is equivalent to feasibility and $p_{[n+1,\dots,2n]}= \det(\bB)>0$ and it is the only among the Routh--Hurwitz conditions that is independent of $\ba$.
In projective space, a point $(p_S)\in\Gr(n,2n)$ is defined only up to a nonzero scalar; multiplying any row of $M$ by a negative factor flips the sign of every Pl\"ucker coordinate $p_S$. Feasibility fixes the sign of $p_{[1,\dots,n]}=a_1\cdots a_n$, which makes the sign of the Plücker coordinates well-defined. The positivity of $p_{[n+1,\dots,2n]}=\det(\bB)$ is then a necessary condition for the steady state to be feasible and stable. 


Our interest is now to study the sign patterns of the Pl\"ucker coordinates from the sign patterns of $(\ba,\bB)$ and the feasibility and stability conditions. We first state the semialgebraic stratum $\mathcal{S}_n(\sigma)$ in algebraic terms in the real Grassmannian ${\rm Gr}(n,2n)$: 

\begin{theorem}\label{prop:toalgebraicGrassmannian}
Let $\sigma\in\{-,+\}^{\,n}\times\{-,+\}^{\,n\times n}$ be an ordered sign assignment for $(\ba,\bB)$ and write $\sigma=(\sigma_a,\sigma_B)$ with $\sigma_a=(\sigma_1,\ldots,\sigma_n)$ and $\sigma_B=(\sigma_{ij})$. 
Then there exists a point $(\ba, \bB)$ in $\mathcal{S}_n(\sigma)$ if and only if there exists $p\in \Gr(n,2n)\subset \Ps^{\binom{2n}{n}-1}$ 
with $\sign(p_{[1,\dots,n]})=\sigma_1\cdots \sigma_n$ such that 
\begin{enumerate}[leftmargin=*]
    \item \label{it:1} the coordinates $p_{T_i\cup j}$ follow the  signs $\chi_{T_i\cup j}=\sigma_{ij}\prod_{k\in [n]\setminus i}\sigma_k$, where $T_i= [n]\setminus\{i\}$ for $i\in [n]$ and $j\in [n+1,\ldots, 2n]$, 
    \item\label{it:2} the inequalities~\eqref{eq:feasibility_global} hold,
    \item\label{it:3} the coefficients $c_i$ defined in~\eqref{eq:stability_plucker_global} for $i = 0,1,\ldots,n-1$ and all the corresponding Hurwitz determinants are positive.
\end{enumerate}
\end{theorem}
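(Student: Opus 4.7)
The plan is to prove the biconditional by following the pipeline of~\eqref{eq:parameter_pipeline}: forward via $(\ba,\bB)\mapsto M=[\diag(\ba)\mid\bB]\mapsto\L=\rowspan(M)\mapsto p=\Pl(\L)$, and backward by reconstructing $(\ba,\bB)$ from a prescribed $p\in\Gr(n,2n)$ with the stated sign data. For the forward implication, stability forces $\det(\bB)>0$, so $M$ has full rank and $\L\in\Gr(n,2n)$ is well-defined. A direct cofactor expansion (along row $i$ of $M_{T_i\cup j}$, whose only nonzero entry is $b_{i,j-n}$ because the $i$-th diagonal column of $\diag(\ba)$ has been removed) yields
\[
p_{[1,\ldots,n]}=a_1\cdots a_n,\qquad
p_{T_i\cup j}=(-1)^{i+n}\,b_{i,j-n}\prod_{k\neq i}a_k,
\]
whose sign patterns give the assignment of item~\ref{it:1} (up to the deterministic permutation factor $(-1)^{i+n}$). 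Item~\ref{it:2} is a direct restatement of $\adj(\bB)\ba>0$ via~\eqref{eq:feasibility_global}, and item~\ref{it:3} follows from~\eqref{eq:stability_plucker_global} together with the Routh--Hurwitz positivity of $\tilde{J}_{\ba,\bB}$ assumed in the definition of $\mathcal{S}_n(\sigma)$.

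For the converse, suppose $p\in\Gr(n,2n)$ satisfies items~\ref{it:1}--\ref{it:3} with $\sign(p_{[1,\ldots,n]})=\sigma_1\cdots\sigma_n$. Since $p_{[1,\ldots,n]}\neq 0$, the subspace $\L=\Pl^{-1}(p)$ admits a representative whose first $n$ columns form an invertible matrix; left-multiplying by its inverse normalizes this to $[I_n\mid B_0]$, and then rescaling row $i$ by a chosen scalar $a_i$ yields the form $M=[\diag(\ba)\mid\diag(\ba)B_0]$, so I would set $\bB:=\diag(\ba)B_0$. I would choose $\ba$ with $\sign(\ba)=\sigma_a$ (the product $\sigma_1\cdots\sigma_n$ is pinned down by $\sign(p_{[1,\ldots,n]})$); then item~\ref{it:1}, read as the identity $\sigma_{ij}=\sigma_i\cdot\sign((B_0)_{i,j-n})$, forces $\sign(\bB)=\sigma_B$. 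With $(\ba,\bB)$ of the correct sign pattern, items~\ref{it:2} and~\ref{it:3} translate back through the same identities~\eqref{eq:feasibility_global} and~\eqref{eq:stability_plucker_global} into the feasibility inequalities $\adj(\bB)\ba>0$ and the Routh--Hurwitz positivity of $\tilde{J}_{\ba,\bB}$, placing $(\ba,\bB)$ in $\mathcal{S}_n(\sigma)$.

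I anticipate that the main obstacle is the sign bookkeeping. Plücker coordinates live projectively, so absolute signs are only meaningful after the global scale is fixed; fixing $\sign(p_{[1,\ldots,n]})=\sigma_1\cdots\sigma_n$ is precisely what eliminates this ambiguity and makes item~\ref{it:1} an unambiguous statement. Moreover, the cofactor expansion of $p_{T_i\cup j}$ carries the permutation factor $(-1)^{i+n}$, so the compatibility of item~\ref{it:1} with $\sigma$ has to be verified against this sign identity and against the fixed product $\sigma_1\cdots\sigma_n$; the check is finite and depends only on~$\sigma$, not on the specific entries of $(\ba,\bB)$. The remaining parts of items~\ref{it:2} and~\ref{it:3} are then straightforward rewrites in Plücker coordinates, since~\eqref{eq:feasibility_global} and~\eqref{eq:stability_plucker_global} were derived precisely to make this translation transparent.
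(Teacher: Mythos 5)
Your proposal is correct and follows essentially the same route as the paper's proof: the forward direction is immediate from the construction of $M=[\diag(\ba)\mid\bB]$, and the converse normalizes a representative of $\Pl^{-1}(p)$ to $[I_n\mid B_0]$ using $p_{[1,\ldots,n]}\neq 0$, rescales rows by chosen $a_i$ with $\sign(\ba)=\sigma_a$, and transports items~\ref{it:2}--\ref{it:3} back via positive-scaling invariance of the rational inequalities. Your explicit tracking of the cofactor factor $(-1)^{i+n}$ in $p_{T_i\cup j}=(-1)^{i+n}b_{i,j-n}\prod_{k\neq i}a_k$ is in fact slightly more careful than the paper's write-up, and is consistent with the signs appearing in Example~\ref{ex:n_3Continued}.
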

\begin{proof}
Let $(\ba, \bB)$ be a point in $\mathcal{S}_n(\sigma)$. 
The Pl\"ucker coordinates of the matrix $[\diag(\ba) \mid \bB]$ satisfy the inequalities~\eqref{eq:feasibility_global} and~\eqref{eq:stability_plucker_global} by construction.  

Conversely, let $p$ be the image of a linear space $\L$ in $\Gr(n, 2n)$ with non-zero coordinates $p_{[1,\ldots, n]}$ and $p_{T_i\cup j}$ for all $n^2$ choices of $T_i$ and $j$. 
Since $p_{[1,\ldots,n]}\neq0$, there exists $N\in\R^{n\times 2n}$ in canonical form whose left $n$ columns are the identity and whose row span is $\L$.
Multiplying the $i$th row of $N$ by $a_i$ for $i=1,\ldots,n$ yields
\begin{equation}\label{eq:matrix_row_op}
    \tilde{N} = 
   \begin{bmatrix}
       a_1  & \cdots & 0   & e_{11}  & \cdots & e_{1n} \\
       \vdots &  \ddots & \vdots  & \vdots & \ddots & \vdots \\
       0 & \cdots & a_n & e_{n1}  & \cdots & e_{nn}
   \end{bmatrix}.
\end{equation}
By fixing $\sign\left(p_{[1,\ldots,n]}\right)=\sigma_1\cdots\sigma_n$, 
we specify an orientation on the Pl\"ucker coordinates $\Pl(\L)$ and can thus talk about their signs.
Thus, if $\Pl(\L)$ satisfies condition~\eqref{it:1}, so do the minors of $\tilde{N}$.
Furthermore, the inequalities in assumptions~\eqref{it:2} and~\eqref{it:3} are rational in the Pl\"ucker coordinates and therefore are invariant under positive scaling, so they are also satisfied by the minors of $\tilde{N}$. 
Expanding these minors shows that the pair $\big(\ba,(e_{ij})_{i,j\in[n]}\big)$ satisfies the sign assignment $\sigma$ together with feasibility and stability. 
\end{proof}

\begin{definition}\label{def:partial_sign}
    Given a sign pattern $\sigma = \sign(\ba,\bB)$, we call the ordered sequence of signs in Item 1 of Theorem~\ref{prop:toalgebraicGrassmannian},
\begin{equation}\label{eq:partialsign}
    [\sigma_1\cdots \sigma_n]\sqcup \Biggl[\sigma_{ij}\prod_{k\in [n]\setminus i}\sigma_k \text{ for } i\in [n] \text{ and } j\in [n+1,\ldots, 2n]\Biggr]
\end{equation}
 the \emph{partial} sign assignment $\overline{\chi}(\sigma)$ of $\sigma$.
\end{definition}

We now turn to the theory of \emph{oriented matroids} to study the sign pattern of Pl\"ucker coordinates under the feasibility and stability conditions. 
This provides a combinatorial characterization of the realizable sign patterns of the Pl\"ucker coordinates.  

We recall that a \emph{matroid} is a pair $(E,\mathcal{B})$, in which $E$ is a finite set and $\mathcal{B}$ is a nonempty collection of subsets of $E$, called \emph{bases}, such that if $B_1,B_2$ are distinct bases and ${b_1\in B_1\setminus B_2}$ then there exists an element $b_2\in B_2\setminus B_1$ such that $(B_1\setminus \{b_1\}\cup \{b_2\})$ is a basis. This important notion generalizes the notion of linear independence among vectors. In \emph{oriented matroids}, bases come with signs: following \cite[Definition 3.5.2, Theorem 3.6.2]{valuated_matroids}, an oriented matroid of rank $k$ on $[N]$ is a matroid $([N],\mathcal{B})$ together with a nonzero, alternating map $\chi\colon[N]^k\rightarrow \{0,+,-\}$, called the \emph{chirotope}, satisfying the \emph{$3$-term Grassmann--Pl\"ucker relations} defined below.
If $M\in \mathbb{R}^{k\times N}$ has full rank, then the function $\chi$ can be defined to take the signs of the $k\times k$ minors. The oriented matroids obtained this way are called \emph{realizable}.
Thus any matrix $M$ representing a point of the real Grassmannian induces a chirotope. The ($3$-term) Grassmann--Pl\"ucker relations are: for any choice of distinct ${i_1,i_2,\ldots,i_n,j_1,j_2\in[2n]}$,
\begin{align}\label{ineq:3_term_gr_pl}
&\text{if } \chi(j_1, i_2, \ldots, i_n)\,
\chi(i_1, j_2, i_3, \ldots, i_n) \geq 0,\quad 
\chi(j_2, i_2, \ldots, i_n)\,
\chi(j_1, i_1, i_3, \ldots, i_n) \geq 0 \nonumber\\
&\text{then } \chi(i_1, i_2, \ldots, i_n)\,
\chi(j_1, j_2, i_3, \ldots, i_n) \geq 0. 
\end{align}
The Grassmann--Plücker relations propagate sign constraints: fixing the signs of certain minors induces the signs of others.  

In our setting the ground set is $[2n]$, and we take the chirotope induced by the ${n\times 2n}$ matrices in \eqref{eq:param_matrix}. Thus, oriented matroids serve as a combinatorial abstraction for analyzing possible sign patterns of Plücker coordinates and for detecting \emph{impossible} sign patterns. We summarize our discussion above:

\begin{corollary}\label{thm:comb_grass_gs}
Let $(\ba, \bB)\in\Rn\times \Rnn$ and let $\L=\rowspan(M)\in \Grnn$ be the associated linear space as defined in~\eqref{eq:param_matrix}.
Given an ordered sign pattern $\sigma\in\{-, +\}^{n+n^2}$, let $p=\mathrm{Pl}(\L)$ be the Pl\"ucker coordinates of $\L$, with orientation specified by \linebreak ${\sign(p_{[1,\ldots, n]}) = \sigma_1\cdots\sigma_n}$. 
If $(\ba, \bB)\in\mathcal{S}_n(\sigma)$, then the ordered sign patterns $\chi_S$ of the Pl\"ucker point $p$ are consistent with the positivity conditions from
\begin{enumerate}[leftmargin=*]
    \item the partial sign assignment $\overline{\chi}(\sigma)$ in \eqref{eq:partialsign},
    \item the feasibility inequalities in \eqref{eq:feasibility_global},
    \item the stability inequalities in \eqref{eq:stability_plucker_global},
    \item the oriented Grassmann--Pl\"ucker relations in \eqref{ineq:3_term_gr_pl}.
\end{enumerate}
\end{corollary}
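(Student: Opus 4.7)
The plan is to deduce the corollary directly from Theorem~\ref{prop:toalgebraicGrassmannian} together with the realizability of the chirotope attached to~$\L$. Given a point $(\ba,\bB)\in\mathcal{S}_n(\sigma)$, the matrix $M$ in~\eqref{eq:param_matrix} has full row rank, since $\diag(\ba)$ is invertible (each $\sigma_i\in\{+,-\}$ forces $a_i\neq 0$). Hence $\L=\rowspan(M)$ is a bona fide point of $\Grnn$ and its Pl\"ucker vector $p=\Pl(\L)$ is well defined up to a nonzero scalar. The choice $\sign(p_{[1,\ldots,n]})=\sigma_1\cdots\sigma_n$ in the hypothesis pins down a representative, so the signs $\chi_S=\sign(p_S)$ become unambiguous for every $S\in\baseset$.

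First, I would verify item~(1). Expanding the minor $p_{T_i\cup j}$ (for $T_i=[n]\setminus\{i\}$ and $j\in\{n+1,\ldots,2n\}$) along the diagonal block $\diag(\ba)$ of $M$ as in~\eqref{eq:pluckerMinorRelation} gives $p_{T_i\cup j}=\pm\, b_{i,\,j-n}\prod_{k\neq i}a_k$, with the sign a fixed function of the index set $T_i\cup j$. Combined with the orientation choice $\sign(p_{[1,\ldots,n]})=\sigma_1\cdots\sigma_n=\sign(a_1\cdots a_n)$, this yields $\sign(p_{T_i\cup j})=\sigma_{i,j-n}\prod_{k\in[n]\setminus i}\sigma_k$, which is exactly the component $\chi_{T_i\cup j}$ of the partial sign assignment $\overline{\chi}(\sigma)$ from Definition~\ref{def:partial_sign}. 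Items~(2) and~(3) are then immediate transcriptions of items~\eqref{it:2} and~\eqref{it:3} of Theorem~\ref{prop:toalgebraicGrassmannian}: the feasibility inequalities~\eqref{eq:feasibility_global} and the stability inequalities on the Hurwitz determinants built from the coefficients~\eqref{eq:stability_plucker_global} hold whenever $(\ba,\bB)\in\mathcal{S}_n(\sigma)$, and both conditions are invariant under positive scaling of $p$, so the orientation choice does not affect them.

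Item~(4) is the combinatorial content of the corollary and is automatic by realizability: the map $\chi\colon[2n]^n\to\{0,+,-\}$ defined on ordered $n$-tuples by $\chi(i_1,\ldots,i_n)=\sign(\det(M_{[i_1,\ldots,i_n]}))$ is, by construction, the chirotope of a realizable oriented matroid of rank $n$ on $[2n]$. Realizable chirotopes satisfy the 3-term oriented Grassmann--Pl\"ucker relations~\eqref{ineq:3_term_gr_pl} — this is a classical fact (see~\cite{oriented_mat}) that follows from Pl\"ucker's quadratic identity $\det(M_{[j_1,i_2,\ldots,i_n]})\det(M_{[i_1,j_2,i_3,\ldots,i_n]}) + \det(M_{[j_2,i_2,\ldots,i_n]})\det(M_{[j_1,i_1,i_3,\ldots,i_n]}) = \det(M_{[i_1,\ldots,i_n]})\det(M_{[j_1,j_2,i_3,\ldots,i_n]})$, inspected for signs.

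There is essentially no hard step here; the result is a reorganization of Theorem~\ref{prop:toalgebraicGrassmannian} in the language of oriented matroids. The only mildly delicate point — and the one I would present with care — is bookkeeping the global orientation: because we pass to projective coordinates, we must check that the single normalization $\sign(p_{[1,\ldots,n]})=\sigma_1\cdots\sigma_n$ is compatible both with the partial sign assignment of Definition~\ref{def:partial_sign} and with the derivation~\eqref{eq:pluckerMinorRelation} used in writing the stability coefficients $c_i$. This is handled uniformly by the Laplace expansion along the $\diag(\ba)$ block that was already invoked above, which guarantees that every Pl\"ucker minor inherits its sign from the same reference determinant.
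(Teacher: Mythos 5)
Your proof is correct and takes essentially the same route as the paper, which states this corollary without a separate proof as a summary of the preceding discussion: items (1)--(3) are the forward direction of Theorem~\ref{prop:toalgebraicGrassmannian} combined with the minor expansions behind~\eqref{eq:pluckerMinorRelation}, \eqref{eq:feasibility_global} and~\eqref{eq:stability_plucker_global}, and item (4) is the classical fact that the chirotope of a realizable oriented matroid satisfies the three-term Grassmann--Pl\"ucker relations. Your explicit bookkeeping of the orientation via $\sign(p_{[1,\ldots,n]})=\sigma_1\cdots\sigma_n$ and the Laplace expansion along the $\diag(\ba)$ block is exactly the argument the paper leaves implicit.
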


We therefore formulate the \emph{realizability problem} in terms of the above positivity conditions to decide whether the ecology is possible.
  
\begin{definition}[Realizable sign assignment]\label{def:signcompletion}
A sign assignment $\chi$ on the Pl\"ucker coordinates is a potential completion of the partial sign assignment $\overline{\chi}(\sigma)$ if 
\begin{enumerate}[leftmargin=21pt]
    \item[(i)]  $\chi_S=\overline{\chi}(\sigma)_S$ for every $S$ whose sign is prescribed, and
    \item[(ii)] $\chi$ satisfies the positivity constraints from the feasibility inequalities~\eqref{eq:feasibility_global}, the stability inequalities~\eqref{eq:stability_plucker_global}, and the oriented Grassmann--Pl\"ucker relations~\eqref{ineq:3_term_gr_pl}.
\end{enumerate}
If no such potential completion exists for the sign data $\sigma$, we call the sign pattern $\sigma$ \emph{impossible}.
\end{definition}

To summarize, the Pl\"ucker map
\eqref{eq:parameter_pipeline} realizes the feasible stratum inside the Grassmannian.
On the other hand, the \emph{stability} conditions involve the specific scaling of the 
first $n$ columns of $M$ using the parameters $\ba$, so they do not define 
a subset of $\Grnn$ alone. Instead, they refine the feasible stratification 
on the lifted space of pairs $(\mathcal{L}, \ba)$. 
The feasible--stable stratification may thus be viewed as a refinement of the oriented matroid stratification of the real Grassmannians which imposes additional feasibility-stability conditions onto it. 
The resulting sign completion problem is a specialized oriented matroid \emph{realization problem} within this stratification. 
This view point connects the Lotka–Volterra dynamical system and its steady states to stratified semi-algebraic regions inside the Grassmannian, where they can be studied using tools of algebraic geometry.

\begin{example}\label{ex:n_3Continued} We revisit Example \ref{ex:n_3} of obligate mutualism of three species, now viewed inside the Grassmannian ${\rm Gr(3,6)}$. 
Consider the  matrix 
\begin{align}
    \label{eq:coupled_matroid_3_6}
M =   \begin{bmatrix}
a_1 & 0 & 0 & b_{1 1} & b_{1 2} & b_{1 3} \\
0 & a_2 & 0 & b_{2 1} & b_{2 2} & b_{2 3} \\
0 & 0 & a_3 & b_{3 1} & b_{3 2} & b_{3 3}
\end{bmatrix},
\end{align}
with the sign conditions that $a_i<0$, $b_{ii}>0$, and $b_{ij}<0$ for $i\neq j$. 
We denote this sign pattern by $\sigma$. 
Under the Pl\"ucker embedding, the coordinates of $M$ are
\begin{align*}\label{eq:PAB_N3relations}
& p_{124} = a_1 a_2 b_{31}, \,\, p_{145} = a_1 B_{13}, \,\,p_{125} = a_1 a_2 b_{32},\,\, p_{146} = a_1 B_{12},\,\, p_{126} = a_1 a_2 b_{33},\\ & p_{156} = a_1 B_{11}, \,\, 
p_{134} = - a_1 a_3 b_{21}, \,\, p_{245} = -a_2 B_{23},\,\, p_{135} = - a_1 a_3 b_{22}, \,\,  p_{246} = -a_2 B_{22}, \\ & p_{136} = - a_1 a_3b_{23}, \,\,
p_{256} = -a_2 B_{21}, \,\, p_{234} = a_2 a_3 b_{11}, \,\, p_{345} = a_3 B_{33}, \,\, p_{235} = a_2 a_3 b_{12}, \\ 
& p_{346} = a_3 B_{32},\,\, p_{236} = a_2 a_3 b_{13}\,\,  p_{356} = a_3 B_{31},\,\, p_{123} = a_1 a_2 a_3,\,\, p_{456} = \det(\bB),
\end{align*}
where $B_{ij}$ denotes the determinant of the matrix obtained by deleting row $i$ and column $j$ of $\bB$ deleted. The assumed signs of $a_i$ and $b_{ij}$ imply the following partial sign assignment $\overline{\chi}(\sigma)$:

\begin{equation*}
\begin{tabular}{cccccccccc}
\toprule
$\chi_{123}$ & $\chi_{124}$ & $\chi_{125}$ & $\chi_{126}$ & $\chi_{134}$ & $\chi_{135}$ & $\chi_{136}$ & $\chi_{145}$ & $\chi_{146}$ & $\chi_{156}$ \\
$-$ & $-$ & $-$ & $+$ & $+$ & $-$ & $+$ & $-$ & $+$ & $?$ \\
$\chi_{234}$ & $\chi_{235}$ & $\chi_{236}$ & $\chi_{245}$ & $\chi_{246}$ & $\chi_{256}$ & $\chi_{345}$ & $\chi_{346}$ & $\chi_{356}$ & $\chi_{456}$ \\
$+$ & $-$ & $-$ & $-$ & $?$ & $-$ & $?$ & $+$ & $-$ & $?$ \\
\bottomrule
\end{tabular}
\end{equation*}
The unknown entries (presented with ``?" in the table) correspond to principal $2\times2$ minors and to $\det(\bB)$.
From stability, we require $\chi_{456}=+$.
Combining this with the oriented Grassmann--Pl\"ucker relations~\eqref{ineq:3_term_gr_pl}, we infer the signs $\chi_{156}= +$, $\chi_{246} = -$, $\chi_{345} = -$. 
The three feasibility inequalities are expressed as
\begin{align*}
   &a_1(-b_{23}b_{32}+b_{22}b_{33})+a_2(b_{13}b_{32}-b_{12}b_{33})+a_3(-b_{13}b_{22}+b_{12}b_{23})= p_{156} + p_{256} + p_{356} > 0,\\
    &a_1(b_{23}b_{31}-b_{21}b_{33})+a_2(-b_{13}b_{31}+b_{11}b_{33})+a_3(b_{13}b_{21}-b_{11}b_{22}) = -p_{146} - p_{246} - p_{346} > 0,\\
    & a_1(-b_{22}b_{31}+b_{21}b_{32})+a_2(b_{12}b_{31}-b_{11}b_{32})+a_3(-b_{12}b_{21}+b_{11}b_{22}) = p_{145} + p_{245} + p_{345} >0. 
\end{align*}
The last line contradicts $\chi_{145}=\chi_{245}=\chi_{345}=-$, showing that this sign configuration is impossible.
\end{example} 

We conclude this section with a few remarks on potential connections to other algebraic frameworks. In the present work, we consider the map into the Grassmannian given by the maximal minors of the matrix $[\diag(\ba) \mid \bB]$. However, it would also be natural to study ecological patterns through the lens of the \emph{collineation variety}, defined via the map taking all minors of a matrix of each size \cite{Gesmundo2025}. In fact, the signs of the minors of smaller size play a crucial role in the sign analysis in Example~\ref{ex:n_3Continued}. This extended embedding captures additional multilinear and combinatorial structures beyond those encoded by the Grassmannian, and may provide a complementary geometric framework for understanding coexistence phenomena and their obstructions.

\section{Effective Methods for Sign Pattern Completion}
\label{sec:computation}
The theoretical framework developed in Sections~\ref{sec:RouthHurwitz} and~\ref{sec:geometry} provides the mathematical foundation for determining feasibility and stability. Given a sign pattern $\sigma$ on the parameters $(\ba, \bB)$, we are interested in whether the feasible-stable stratum $S_n(\sigma)$ defined in~\eqref{eq:feas_stable_stratum} is nonempty. From $\sigma$, we obtain a partial sign pattern $\overline{\chi}(\sigma)$ on the Plücker coordinates through the matrix representation $M = [\diag(\ba) | \bB]$, as in Definition~\ref{def:partial_sign}. The central computational challenge is to complete this partial sign pattern efficiently, i.e., determining which full sign assignments are compatible with feasibility and stability. This section presents our computational approach to this sign pattern completion through a two-pronged algorithmic strategy of \texttt{Propagation} and \texttt{Branching with Ecological Constraints}. 
In \texttt{Propagation}, we systematically apply Grassmann--Plücker relations to infer unknown signs $\chi(\sigma)$ until no further propagation is possible. In \texttt{Branching with Ecological Constraints}, we branch on remaining unknowns while naturally incorporating feasibility and stability checks to prune the search space. 

We begin by recalling the problem we are interested in, which we discussed through the lens of the semialgebraic feasible-stable strata in Section~\ref{sec:RouthHurwitz} and the Grassmannian in Section~\ref{sec:geometry}. In this section, we approach the problem~\ref{prob:feasible_ecology} from a purely computational perspective. As introduced in Definition~\ref{def:partial_sign}, given an ordered sign pattern $\sigma$ on the ecological parameters $(\ba, \bB)$, we obtain an ordered partial sign assignment $\overline{\chi}(\sigma)$  on the $\binom{2n}{n}$ Plücker coordinates of the real Grassmannian $\Grnn$ through the pipeline outlined in~\eqref{eq:parameter_pipeline}. These correspond to sign patterns attained in the complement of the arrangement of coordinate hyperplanes $p_{S} = 0$ in $\mathbb{P}^{\binom{2n}{n}-1}$ for $S\in\binom{[2n]}{n}$ that stratify the Grassmannian, with each stratum representing a different combinatorial type of sign pattern. Some Plücker coordinates have their signs determined by the parameter constraints, while others signs remain unknown. For computational purposes, we extend our earlier notation from Definition~\ref{def:partial_sign} by representing this ordered partial sign pattern as a function on $n$-element bases,
\begin{equation}\label{eq:sign_assignement}
    \chi\colon\baseset \to \{?, - , +\},\quad\chi(S)=\left\{\begin{array}{cl}
    \overline{\chi}(\sigma)&\text{ if the sign is known,}\\
    ?&\text{otherwise}.
    \end{array}\right.
\end{equation}
The computational challenge can be naturally viewed as navigating a binary decision tree. Starting from this partial sign pattern, we must decide whether each unknown basis should be assigned $+$ or $-$. Without additional constraints, this yields a complete binary tree with $2^k$ leaf nodes, where $k$ is the number of unknown signs. However, the geometric structure of the Grassmannian provides powerful constraints through the Grassmann--Plücker relations~\eqref{ineq:3_term_gr_pl}, transforming this exponential search into a constrained branch propagation problem.

\begin{problem}[Sign Pattern Completion]\label{prob:signcompletion}
Given the sign pattern $\sigma$ for the matrix\linebreak ${M = [\diag(\ba) | \bB] \in \R^{n \times 2n}}$ from the parameter matrix representation~\eqref{eq:param_matrix}, we obtain a partial sign pattern $\overline{\chi}(\sigma)$ on Plücker coordinates. Compute all possible completions of the sign pattern having no obstruction to the positivity conditions from the Grassmann--Pl\"ucker relations and the feasibility and stability conditions. 
\end{problem}

When we assign a sign to one basis, the three-term relations~\eqref{ineq:3_term_gr_pl} often force specific signs on other bases, eliminating entire subtrees from consideration. Moreover, incompatible sign assignments lead to contradictions that can be detected early, allowing us to backtrack without exploring deeper branches. As announced above, our algorithmic strategy for Problem~\ref{prob:signcompletion} divides into two parts: verifying consistency with the Grassmann–Plücker relations and checking the feasibility-stability conditions. In fact, \texttt{Propagation} systematically applies Grassmann--Plücker relations to infer as many unknown signs as possible. For each unknown basis, we generate compatible bases and apply the 3-term relations. If a relation uniquely determines a sign, then we update the pattern and continue; if a contradiction arises, then we prune the branch generating the invalid assignment. \texttt{Propagation} terminates when no further inferences are possible. \texttt{Branching with Ecological Constraints} activates when propagation terminates with unknown signs remaining in the partial sign pattern. We select undetermined bases and recursively explore both possible sign assignments, with each branch triggering new \texttt {Propagation}. Complete sign patterns are validated against the feasibility and stability constraints, providing natural pruning of branches that are not feasible and stable. The algorithms described in this section are implemented in \textsc{Maple} and available in the repository \cite{gitlab}.

For \texttt{Propagation}, our algorithm consists of three core mathematical operations, which we formulate as algorithmic steps as presented in Algorithm~\ref{alg:propagate_sign}. Step 1 is the \texttt{Basis Selection}: For each unknown basis $B_{\text{u}}$, systematically examine all possible 2-element subsets $Y \subseteq B_{\text{u}}$. Step 2 is the \texttt{Compatible Basis Generation}: For each subset $Y$, find all known bases $B_{\text{c}}$ that are compatible with $B_{\text{u}}$ via $Y$ (i.e., $B_{\text{u}}\setminus Y \subseteq B_{\text{c}}$ and $Y \cap B_{\text{c}} = \emptyset$), then generate the five auxiliary bases needed for the 3-term Grassmann--Plücker relation. Step 3 is the \texttt{Sign Inference}: Apply the 3-term Grassmann--Plücker relation to infer the sign of $B_{\text{u}}$ or detect conflicts.

\begin{algorithm}[h!]
    \caption{\textbf{PropagateSignForBasis}($\chi$, $B_{\text{u}}$)}
    \label{alg:propagate_sign}
    \KwIn{$\chi$ -- current sign pattern, $B_{\text{u}}$ -- unknown basis to infer}
    \KwOut{inferred sign, conflict, or no inference}
    \SetAlgoLined
    \DontPrintSemicolon
    $n \leftarrow |B_{\text{u}}|$\;
    \ForEach{2-element subset $Y = [y_1, y_2] \subseteq B_{\mathrm{u}}$}{
        $B_{\setminus Y} \leftarrow B_{\text{u}} \setminus Y$ \tcp*{Common $n-2$ elements}
        \;
        \ForEach{known basis $B_{\mathrm{c}}$ in $\chi$}{
            \If{$B_{\setminus Y} \subseteq B_{\mathrm{c}}$ and $Y \cap B_{\mathrm{c}} = \emptyset$}{
                \tcp{$B_{\text{c}}$ is compatible with $B_{\text{u}}$ via $Y$}
                \tcp{Extract the complement set from the known basis}
                $X \leftarrow B_{\text{c}} \setminus B_{\setminus Y} $ where $X = [x_1, x_2]$\;
                \;
                \tcp{Construct four auxiliary bases}
                $J_1 \leftarrow B_{\setminus Y}  \cup [y_1, x_2]$\;
                $J_2 \leftarrow B_{\setminus Y}  \cup [x_1, y_2]$\;
                $J_3 \leftarrow B_{\setminus Y}  \cup [y_2, x_2]$\;
                $J_4 \leftarrow B_{\setminus Y}  \cup [y_1, x_1]$\;
                \;
                \tcp{Apply 3-term Grassmann--Plücker relation}
                \If{ $\chi(B_{\mathrm{c}}), \chi(J_1), \chi(J_2), \chi(J_3), \chi(J_4) \neq ?$}{
                    Try to infer $\chi(B_{\text{u}})$ by solving G-P relation\;
                    Record this inference with evidence\;
                }
            }
        }
    }
    \;
    \Return aggregated inference result\;
\end{algorithm}

If \texttt{Propagation} terminates with unknown signs remaining in the partial sign pattern, then the algorithm transitions to \texttt{Branching with Ecological Constraints}. At this point, the Grassmann--Plücker relations have extracted all possible combinatorial information, and further progress requires making assumptions on the remaining undetermined signs and verifying them. This transition gives a natural opportunity to check the ecological constraints given by the feasibility and stability conditions and to prune potential branches that cannot represent feasible and stable sign patterns. In the base case, if all bases have determined signs, then the algorithm validates the complete pattern against the positivity of the feasibility constraints in~\eqref{eq:feasibility_global} and the positivity of the stability constraints in~\eqref{eq:stability_plucker_global}. We note here that our current implementation does not incorporate the higher-order positivity conditions imposed by the Hurwitz polynomials introduced in~\eqref{eq:Hurwitzdeterminant} as these conditions are no longer linear in the Pl\"ucker coordinates and thus are more costly to verify.

The complete search procedure is a simple recursive wrapper that collects all valid  completions. If no sign can be propagated, we pick an unknown basis in the current partial sign pattern and generate a branch for either possible sign assignments $\{-, +\}$. 
In each branch, we create an augmented copy of the current sign pattern with the new sign assignment and invoke the \texttt{Propagation} again. If the procedure runs without detecting any combinatorial conflicts, the algorithm recurses on the updated pattern. However, if \texttt{Propagation} reveals a contradiction [indicating that the sign pattern cannot be realized in any stratum of $\Grnn$], then the branch is pruned. 

We close this section by analyzing the convergence properties and computational complexity of our algorithm. Understanding these theoretical foundations provides confidence in the practical applicability of our approach to sign pattern completion problems. Let $k = |\{B\mid \chi(B) = \mbox{?}\}|$ be the number of initial unknowns and $N = \binom{2n}{n}$ the total number of bases. The two-part structure affects complexity analysis as follows: In \texttt{Propagation}, each step examines all unknown bases and applies the Grassmann--Plücker relations, requiring $O(N^2)$ operations per propagation round. In the best case, propagation can determine all remaining signs, eliminating the need for branching entirely. In \texttt{Branching}, the worst-case time complexity is $O(2^k \cdot N^2)$, corresponding to exploring all $2^k$ possible sign assignments with $O(N^2)$ propagation cost per node. In practice, however, the alternation between branching and propagation significantly reduces the effective search space by inferring signs, reducing $k$ by a factor $p \in (0, 1)$ and yielding an effective complexity of $O(2^{k(1-p)} \cdot N^2)$. The space complexity is $O(k \cdot N)$ for maintaining the recursion stack and sign pattern copies.

The convergence of our algorithm is guaranteed by the finite nature of the search space and the monotonic progress of sign assignments in both phases. \texttt{Propagation} terminates when no further inferences are possible, and \texttt{Branching} makes explicit progress by assigning signs to unknowns. The following theorem formalizes this observation:

\begin{theorem}[Termination of two-part Algorithm]
	\label{thm:termination}
	The two-phase algorithm terminates in finite time for any initial partial sign pattern $\overline{\chi}$ on the Plücker coordinates of the real Grassmannian $\Grnn$.
\end{theorem}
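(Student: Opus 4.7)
The plan is to exhibit a monotone potential function on partial sign patterns and argue that both phases of the algorithm either strictly decrease this potential or terminate outright in finitely many further steps. Define
\[
\Phi(\chi) \;=\; \bigl|\{S \in \baseset \,:\, \chi(S) = \,?\,\}\bigr|,
\]
the number of undetermined Pl\"ucker signs. Since $\baseset$ is finite, $\Phi$ takes values in $\{0,1,\dots,\binom{2n}{n}\}$, and a pattern is complete exactly when $\Phi(\chi)=0$.

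First I would show that \texttt{Propagation} (Algorithm~\ref{alg:propagate_sign}) terminates. Each successful application of a $3$-term Grassmann--Pl\"ucker relation either (i) assigns a definite sign to a previously unknown basis, strictly reducing $\Phi$ by one, or (ii) detects a conflict, in which case the branch is pruned and the routine exits. Sign assignments made within a branch are never revised, so after at most $\binom{2n}{n}$ successful inferences the algorithm must reach a fixed point in which no remaining 3-term relation determines a new sign. Each propagation round scans only finitely many compatible pairs of bases, costing $O(N^2)$ work with $N=\binom{2n}{n}$, so every round is individually finite, and only finitely many rounds occur.

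Next I would bound the recursion produced by \texttt{Branching with Ecological Constraints}. When propagation halts with $\Phi(\chi)>0$, the wrapper selects one undetermined basis and recurses on the two augmented patterns obtained by fixing its sign to $+$ or $-$. Each such recursive call begins with $\Phi$ strictly smaller than before the branching decision, so the call tree is binary with depth at most $\Phi(\overline{\chi}) \le \binom{2n}{n}$ and at most $2^{\binom{2n}{n}}$ leaves. At every leaf we perform the feasibility check from~\eqref{eq:feasibility_global} and the stability check from~\eqref{eq:stability_plucker_global}, each of which evaluates finitely many polynomial inequalities in the determined signs.

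The one subtle point, which is the mildest of obstacles, is ensuring that \texttt{Propagation} does not loop indefinitely when a given round fails to infer any new sign. This follows from the monotonicity of $\Phi$: a round is a sweep through all $O(N^2)$ candidate 3-term relations; if such a sweep triggers no inference, then no later sweep on the same pattern can either, and the loop exits at a fixed point. Combining this fixed-point argument for \texttt{Propagation} with the strict decrease of $\Phi$ across every branching step, the depth-first traversal of the call tree visits only finitely many nodes, each performing finitely many arithmetic and set-membership operations, which yields the claimed termination.
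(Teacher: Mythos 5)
Your proposal is correct and follows essentially the same route as the paper: both arguments hinge on the finiteness of $\baseset$ and the strict monotone decrease of the number of undetermined signs under every propagation inference and every branching step, bounding the recursion depth by the initial number of unknowns. Your version merely makes the potential function $\Phi$ and the fixed-point behaviour of a propagation sweep more explicit than the paper does.
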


\begin{proof} The search space consists of the $N = \binom{2n}{n}$ bases of $n$ elements, hence is finite.  
Let $U$ be the set of bases with undetermined signs. In \texttt{Propagation}, Grassmann–Plücker relations can determine at most $|U|$ new signs, and each inference strictly reduces $|U|$.  
In \texttt{Branching}, every recursive call assigns a sign to at least one unknown basis, again strictly reducing $|U|$.  
Thus, along any path of the search tree, $|U|$ decreases monotonically from its initial value $k = |U|_{\mathrm{initial}}$ to $0$, giving a recursion depth of at most $k$. Since each step either infers a new sign or prunes a branch via a detected conflict, no branch can continue indefinitely.  
Because both $N$ and $k$ are finite, the entire search terminates in finite time.
\end{proof}

\section{Experiments}
\label{Sec:Experim}
To visualize the experiments that follow, we adopt the approach of \cite{Pierre} and encode networks of types of ecological interactions (which we will also call \emph{ecological patterns}) as colored complete graphs on $n$ vertices, as in Figure~\ref{fig:Pierregraph}).
We recall that, in \cite{Pierre}, the authors classify impossible ecologies for for $n<3$ via exhaustive case-by-case computations and extend this to conjectures for $n = 4,5$ by random numerical sampling of feasible and stable steady states of the Lotka--Volterra system.

\begin{figure}[b!]
\centering
\tikzset{
  fillednode/.style={circle, draw=black, fill=black, inner sep=2pt},
  node/.style={circle, draw=black, fill=white, inner sep=2pt},
  edge/.style={line width=1pt},
  redge/.style={edge, red},
  bedge/.style={edge, blue}
}
\noindent\begin{tikzpicture}[>=stealth,scale=1.3]
\draw[postaction={decorate},decoration={markings,mark=at position 0.50 with {\arrow{<}}},thick] (0.3090,0.9511) -- (-0.8090,0.5878);
\draw[postaction={decorate},decoration={markings,mark=at position 0.80 with {\arrow{<}}},thick] (0.3090,0.9511) -- (-0.8090,-0.5878);
\draw[red,thick] (0.3090,0.9511) -- (0.3090,-0.9511);
\draw[red,thick] (0.3090,0.9511) -- (1.0000,-0.0000);
\draw[blue,thick] (-0.8090,0.5878) -- (-0.8090,-0.5878);
\draw[postaction={decorate},decoration={markings,mark=at position 0.20 with {\arrow{>}}},thick] (-0.8090,0.5878) -- (0.3090,-0.9511);
\draw[postaction={decorate},decoration={markings,mark=at position 0.20 with {\arrow{>}}},thick] (-0.8090,0.5878) -- (1.0000,-0.0000);
\draw[postaction={decorate},decoration={markings,mark=at position 0.50 with {\arrow{>}}},thick] (-0.8090,-0.5878) -- (0.3090,-0.9511);
\draw[postaction={decorate},decoration={markings,mark=at position 0.20 with {\arrow{>}}},thick] (-0.8090,-0.5878) -- (1.0000,-0.0000);
\draw[blue,thick] (0.3090,-0.9511) -- (1.0000,-0.0000);
\fill (0.3090,0.9511) circle (0.10);
\fill (-0.8090,0.5878) circle (0.10);
\fill (-0.8090,-0.5878) circle (0.10);
\filldraw[fill=white] (0.3090,-0.9511) circle (0.10);
\filldraw[fill=white] (1.0000,-0.0000) circle (0.10);
\end{tikzpicture}
\vspace{.5cm}
\hspace{1cm}
\noindent\begin{tikzpicture}[every text node part/.style={align=center},line cap=round,>=stealth,font=\small,scale=1.3]
\node at (-0.08,-1.5) {$\vec{\dot{x}}=\diag(\vec{x})\left(\ba-\bB\cdot\vec{x}\right)$};
\draw[<-] (-1.05,-1.65) -- (-1.05,-2.2) -- (-0.5,-2.2) node[anchor=west,inner sep=1.5pt] {$\displaystyle\dot{x}_i=x_i\left(a_i-\sum_{j=1}^n{b_{ij}x_j}\right),\;i=1,2,\dots,N$};
\draw[<-] (0.9,-2.4) -- (0.9,-2.9) -- (-0.2,-2.9) -- (-0.2,-3.2) node[anchor=north] {growth rates\\$a_i\gtrless 0$};
\draw[<-] (2.,-2.4) -- (2.,-3.2) node[anchor=north] {interactions\\$b_{ij}\gtrless 0,b_{ii}>0$};
\end{tikzpicture}


\noindent\begin{tikzpicture}[every text node part/.style={align=center},line cap=round,>=stealth,font=\small,scale=1.3]
\begin{scope}[shift={(0,0.2)}]
\filldraw (0,-2) circle(2.5pt);
\node at (-0.25,-2) {$i\vphantom{j}$};
\node[anchor=west] at (0.4,-2) {growing species:\vphantom{d}};
\node at (1.7,-2.4) {$a_i > 0 $};
\node at (-0.25,-2.9) {$i\vphantom{j}$};
\draw (0,-2.9) circle (2.5pt);
\node[anchor=west] at (0.4,-2.9) {dying species:};
\node at(1.7,-3.3) {$a_i < 0 $};
\end{scope}
\begin{scope}[shift={(3.3,2)}]
\draw[red,thick] (0.1,-3.8) node[anchor=east,inner sep=1.5pt,black] {$i\vphantom{j}$} -- (0.6,-3.8) node[anchor=west,inner sep=1.5pt,black] {$j$};
\draw[blue,thick] (0.1,-4.5) node[anchor=east,inner sep=1.5pt,black] {$i\vphantom{j}$} -- (0.6,-4.5) node[anchor=west,inner sep=1.5pt,black] {$j$};
\draw[postaction={decorate},decoration={markings,mark=at position 0.6 with {\arrow{>}}},thick] (0.1,-5.2) node[anchor=east,inner sep=1.5pt,black] {$i\vphantom{j}$} -- (0.6,-5.2) node[anchor=west,inner sep=1.5pt,black] {$j$};
\node[anchor=west] at (1,-3.8){competition:};
\node at (3.6,-3.8) {$b_{ij},b_{ji}>0$};
\node[anchor=west] at (1,-4.5) {mutualism:\vphantom{p}};
\node at (3.6,-4.5){$b_{ij},b_{ji} < 0$};
\node[anchor=west] at (1,-5.2) {predation:};
\node at (3.6,-5.2){$b_{ij} < 0,b_{ji}>0$};
\end{scope}
\end{tikzpicture}

\caption{Lotka--Volterra ecological dynamics on a network of $n=5$ species: definition of the different types of ecological interactions. Figure redrawn from \cite{Pierre}.}
\label{fig:Pierregraph}
\end{figure}
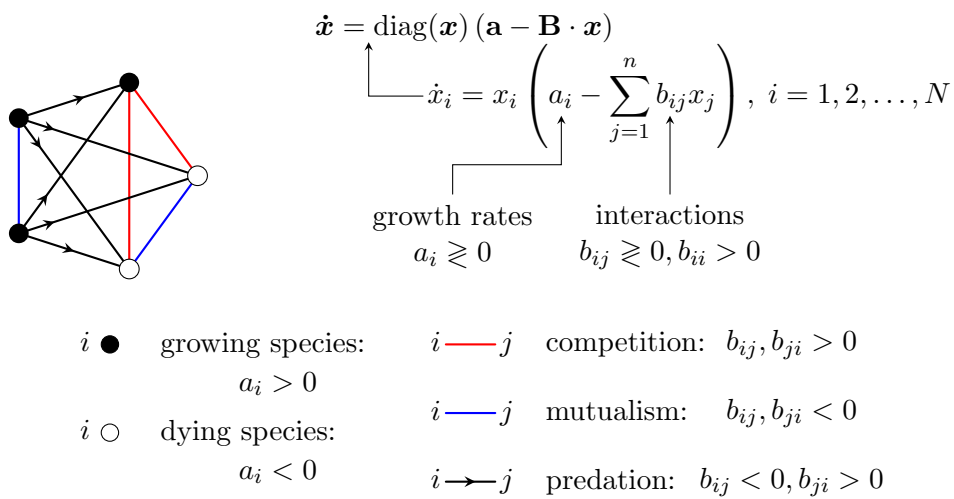

Our aim is to complement and refine this approach by using tools from computational algebraic geometry introduced above, enabling an exact symbolic analysis of the parameter space governing feasibility and stability. In particular, we seek to characterize the semialgebraic structure of the coexistence regions associated with given interaction topologies or ecological patterns. The following theorem summarizes our results:

\begin{theorem}\label{thm:impossible4} The three ecological networks highlighted in the boxed region of Figure~\ref{fig:n4impossible} are impossible ecologies, i.e., no choice of growth rates $\ba$ and interaction strengths $\bB$ with the prescribed signs leads to a positive, locally asymptotically stable equilibrium of the Lotka--Volterra system, at which all four species coexist at positive abundances.
\end{theorem}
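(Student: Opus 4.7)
The plan is to apply Corollary~\ref{thm:comb_grass_gs} to each of the three sign patterns $\sigma$ shown in the boxed region of Figure~\ref{fig:n4impossible} and to show that no sign completion of the partial sign pattern $\overline{\chi}(\sigma)$ on the $\binom{8}{4}=70$ Plücker coordinates of $\Gr_{\mathbb{R}}(4,8)$ is simultaneously consistent with the three-term Grassmann--Plücker relations~\eqref{ineq:3_term_gr_pl}, the feasibility inequalities~\eqref{eq:feasibility_global}, and the coefficient positivity conditions $c_0,\ldots,c_{n-1}>0$ derived from~\eqref{eq:stability_plucker_global}. Any such nonexistence forces $\mathcal{S}_4(\sigma)=\varnothing$ and is therefore the desired impossibility.

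For each candidate $\sigma$, I would first compute $\overline{\chi}(\sigma)$ using the identity $p_{[I^c,\,4+I]} = (-1)^{\pi(I)}\prod_{j\in I^c} a_j\,\det(\bB_{I^c})$ from~\eqref{eq:pluckerMinorRelation}. The Plücker coordinates that involve at most one factor of $a_i$, together with those corresponding to $1\times 1$ minors of $\bB$, have signs fixed by $\sigma$, whereas the remaining coordinates, which involve $2\times 2$, $3\times 3$, or $4\times 4$ minors of $\bB$, do not in general; stability already forces $\chi_{5678}=+$ at the outset. I would then run the two-phase algorithm of Section~\ref{sec:computation}: Propagation exhaustively applies the three-term relations~\eqref{ineq:3_term_gr_pl} to infer further signs, and when it saturates, Branching splits on an unknown basis, with each branch pruned whenever a three-term relation, a feasibility inequality, or one of the conditions $c_i>0$ is violated. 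The theorem follows once every branch for each of the three sign patterns closes.

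The main obstacle is computational rather than conceptual: with $70$ bases, many relevant three-term relations per basis, and branching depths that can exceed ten in typical runs, a purely manual analogue of the three-species computation in Example~\ref{ex:n_3Continued} is out of reach. The proof therefore reduces to executing the \textsc{Maple} implementation in~\cite{gitlab} on each of the three patterns and reading off the resulting termination certificate. One subtlety is that the algorithm enforces only the necessary stability conditions $c_i>0$ rather than the full Hurwitz determinants~\eqref{eq:Hurwitzdeterminant}; this is harmless for an impossibility statement, because a contradiction obtained from a weaker necessary condition is \emph{a fortiori} a contradiction under the full Routh--Hurwitz constraints. Extracting a compact, human-readable certificate in the style of Example~\ref{ex:n_3Continued} from each run, although not strictly required for the proof, would be a desirable refinement.
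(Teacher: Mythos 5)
Your proposal is correct and follows essentially the same route as the paper: reduce each of the three sign patterns to the sign-completion problem of Corollary~\ref{thm:comb_grass_gs} and certify emptiness of $\mathcal{S}_4(\sigma)$ by an exhaustive run of the propagation-and-branching algorithm of Section~\ref{sec:computation}, which is exactly the paper's argument. Your added remark that using only the necessary conditions $c_i>0$ (rather than the full Hurwitz determinants) is harmless for an impossibility proof is a correct and worthwhile clarification of a point the paper leaves implicit.
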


\begin{proof}
Suppose, to the contrary, that such an equilibrium exists for one of the three networks. Then there would be parameters $(\ba,\bB)$ consistent with the sign pattern and a Pl\"ucker point $p$ whose sign assignment $\chi(p)$ extends the partial assignment $\overline{\chi}(\sigma)$ and satisfies the feasibility inequalities~\eqref{eq:feasibility_global}, the stability inequalities~\eqref{eq:stability_plucker_global}, and the Grassmann--Pl\"ucker relations~\eqref{ineq:3_term_gr_pl}, i.e. a valid sign completion~\eqref{eq:sign_assignement}. Our computational check, using the two-pronged algorithm described in Section~\ref{sec:computation}, exhaustively searched for such completions and finds none in each of the three cases, certifying that the feasible-stable set is empty. This contradiction proves the claim.
\end{proof}\\

After the impossible cases revealed by Theorem~\ref{thm:impossible4}, we turn to configurations for which our computations produce few completions which still provide useful information.
For $n = 3$, it was shown in~\cite{Pierre} that exactly four ecological sign patterns are impossible. 
Two of these four configurations are symmetric, and our algorithm correctly certifies them to have no valid sign completions, i.e., to be impossible. For the remaining two (asymmetric) configurations, the algorithm returns a single completion each.  However, these completions violate additional sign constraints involving smaller minors that are not yet incorporated into our implementation, see Example~\ref{Ex:N3Continued_second}. Hence, all four ecologies identified in~\cite{Pierre} remain impossible.

In addition to the three $n = 4$ ecological patterns that our code proves to be impossible, we observe a striking difference in the completion counts for the remaining two symmetric patterns also claimed to be impossible in~\cite{Pierre} (Figure~\ref{fig:n4impossible}). While the code returns only 64 completions for each of these symmetric cases, the corresponding asymmetric patterns yield over 1000 completions. Even when stability and feasibility checks are disabled, and only the chirotope rule is applied, the symmetric ecologies yield only 256 completions, substantially fewer than the generic case. This demonstrates that the chirotope rule alone imposes strong combinatorial restrictions on the sign patterns of Plücker coordinates for symmetric interaction structures. These observations suggest that symmetry in ecological networks introduces intrinsic combinatorial obstructions to coexistence. We anticipate that further analysis, such as incorporating additional stability sign conditions, or examining signs of smaller minors, could rule out these cases completely.

\begin{figure}[t]
\centering

\tikzset{
  fillednode/.style={circle, draw=black, fill=black, inner sep=2pt},
  node/.style={circle, draw=black, fill=white, inner sep=2pt},
  edge/.style={line width=1pt},
  redge/.style={edge, red},
  bedge/.style={edge, blue},>=stealth
}

\noindent
\makebox[\textwidth][c]{%
  \fbox{%
    \begin{minipage}[t]{0.5\textwidth}
    \centering
    \foreach \i in {0,1,2} {
      \begin{tikzpicture}[scale=0.8, baseline={(current bounding box.center)}]
        \coordinate (A) at (0,2);
        \coordinate (B) at (-1,1);
        \coordinate (C) at (1,1);
        \coordinate (D) at (0,0);

        \ifnum\i=0
          \draw[bedge] (A) -- (B);
          \draw[bedge] (A) -- (C);
          \draw[bedge] (A) -- (D);
          \draw[bedge] (B) -- (D);
          \draw[bedge] (B) -- (C);
          \draw[bedge] (C) -- (D);
        \else\ifnum\i=1
          \draw[redge] (A) -- (B);
          \draw[redge] (A) -- (C);
          \draw[redge] (A) -- (D);
          \draw[bedge] (B) -- (D);
          \draw[bedge] (B) -- (C);
          \draw[bedge] (C) -- (D);
        \else \ifnum \i=2
          \draw[bedge] (A) -- (B);
          \draw[redge] (A) -- (C);
          \draw[redge] (A) -- (D);
          \draw[redge] (B) -- (D);
          \draw[redge] (B) -- (C);
          \draw[bedge] (C) -- (D);
        \fi\fi\fi

        \ifnum\i=0
          \node[node] at (A) {};
          \node[node] at (B) {};
          \node[node] at (C) {};
          \node[node] at (D) {};
        \else \ifnum\i=1
          \node[fillednode] at (A) {};
          \node[node] at (B) {};
          \node[node] at (C) {};
          \node[node] at (D) {};
        \else \ifnum \i=2
          \node[fillednode] at (A) {};
          \node[fillednode] at (B) {};
          \node[node] at (C) {};
          \node[node] at (D) {};
        \fi\fi\fi
      \end{tikzpicture}
      \hspace{0.2cm}
    }
    \end{minipage}%
  }

  \raisebox{0pt}{ 
  \begin{minipage}[t]{0.4\textwidth}
    \centering
    \foreach \i in {3,4} {
      \begin{tikzpicture}[scale=0.8, baseline={(current bounding box.center)}]
        \coordinate (A) at (0,2);
        \coordinate (B) at (-1,1);
        \coordinate (C) at (1,1);
        \coordinate (D) at (0,0);

        \draw[bedge] (A) -- (B);
        \draw[redge] (A) -- (C);
        \draw[redge] (A) -- (D);
        \draw[redge] (B) -- (D);
        \draw[redge] (B) -- (C);
        \draw[bedge] (C) -- (D);

        \ifnum\i=3
          \node[fillednode] at (A) {};
          \node[node] at (B) {};
          \node[node] at (C) {};
          \node[node] at (D) {};
        \else
          \node[node] at (A) {};
          \node[node] at (B) {};
          \node[node] at (C) {};
          \node[node] at (D) {};
        \fi
      \end{tikzpicture}
      \hspace{0.2cm}
    }
  \end{minipage}%
  }
}

\vspace{0.5cm}
\foreach \i in {0,...,5} {
  \begin{tikzpicture}[scale=0.8]
    \coordinate (A) at (0,1);
    \coordinate (B) at (-1,0);
    \coordinate (C) at (1,0);
    \coordinate (D) at (0,-1);

    \ifnum\i=0
      \draw[postaction={decorate},decoration={markings,mark=at position 0.50 with {\arrow{>}}},thick] (B) -- (A);
      \draw[postaction={decorate},decoration={markings,mark=at position 0.40 with {\arrow{>}}},thick] (B) -- (C);
      \draw[midarrow, thick] (B) -- (D);
      \draw[redge] (A) -- (D);
      \draw[redge] (A) -- (C);
      \draw[bedge] (C) -- (D);
    \else\ifnum\i=1
      \draw[bedge] (A) -- (B);
      \draw[midarrow, thick] (A) -- (C);
      \draw[midarrow, thick] (A) -- (D);
      \draw[midarrow, thick] (B) -- (C);
      \draw[midarrow, thick] (B) -- (D);
      \draw[bedge] (C) -- (D);
    \else\ifnum\i=2
      \draw[redge] (B) -- (A);
      \draw[midarrow, thick] (C) -- (A);
      \draw[redge] (D) -- (A);
      \draw[redge] (B) -- (C);
      \draw[bedge] (B) -- (D);
      \draw[redge] (C) -- (D);
    \else\ifnum\i=3
      \draw[redge] (A) -- (B);
      \draw[redge] (A) -- (C);
      \draw[redge] (A) -- (D);
      \draw[midarrow, thick] (B) -- (D);
      \draw[midarrow, thick] (C) -- (B);
      \draw[midarrow, thick] (D) -- (C);
    \else\ifnum\i=4
    \draw[midarrow, thick] (A) -- (C);
      \draw[midarrow, thick] (C) -- (B);
      \draw[midarrow, thick] (D) -- (C);
      \draw[midarrow, thick] (B) -- (D);
      \draw[midarrow, thick] (A) -- (D);
      \draw[midarrow, thick] (A) -- (B);
    \else \i=5
       \draw[midarrow, thick] (A) -- (B);
      \draw[midarrow, thick] (C) -- (A);
      \draw[redge] (A) -- (D);
      \draw[midarrow, thick] (B) -- (D);
      \draw[redge] (B) -- (C);
      \draw[midarrow, thick] (D) -- (C);
    \fi\fi\fi\fi\fi

    \ifnum\i=0
      \node[fillednode] at (A) {};
      \node[fillednode] at (B) {};
      \node[node] at (C) {};
      \node[node] at (D) {};
    \else \ifnum\i=1
      \node[fillednode] at (A) {};
      \node[fillednode] at (B) {};
      \node[node] at (C) {};
      \node[node] at (D) {};
      \else \ifnum\i=2
     \node[fillednode] at (A) {};
      \node[node] at (B) {};
      \node[node] at (C) {};
      \node[node] at (D) {};
     \else \ifnum\i=3
       \node[fillednode] at (A) {};
      \node[node] at (B) {};
      \node[node] at (C) {};
      \node[node] at (D) {};
     \else \ifnum\i=4
      \node[fillednode] at (A) {};
      \node[node] at (B) {};
      \node[node] at (C) {};
      \node[node] at (D) {};
    \else \i=5
       \node[node] at (A) {};
      \node[node] at (B) {};
      \node[node] at (C) {};
      \node[node] at (D) {};
    \fi \fi \fi \fi \fi
\end{tikzpicture}
  \hspace{0.6cm}
}
\caption{Ecological patterns of $n=4$ species conjectured to be impossible in~\cite{Pierre}. The ecological patterns in the first row are symmetric; those in the second row are asymmetric. Our computations establish that the symmetric patterns contained in the box are impossible. }
\label{fig:n4impossible}
\end{figure}
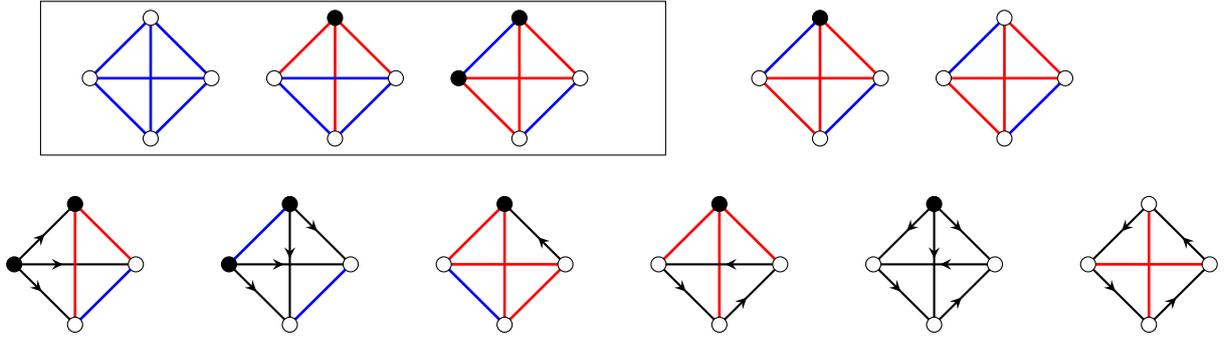

\begin{example}
\label{Ex:N3Continued_second} 
We consider \emph{facultative predation on two obligate mutualists} with $n=3$, shown to be an impossible ecology in \cite{Pierre}. This corresponds to the sign pattern $a_1>0$, $a_2<0$, $a_3<0$ and $b_{11}$, $b_{22}$, $b_{33}>0$ and $b_{21}$, $b_{31}>0$ and $b_{12}$, $b_{13}$, $b_{23}$, $b_{32}<0$. We thus have the ordered sign assignment $\sigma=(+,-,-,+,-,-,+,+,-,+,-,+)$. Our computation shows the following sign pattern $\chi$ is the unique valid completion of $\sigma$:

\begin{equation*}
\begin{tabular}{cccccccccc}
\toprule
$\chi_{123}$ & $\chi_{124}$ & $\chi_{125}$ & $\chi_{126}$ & $\chi_{134}$ & $\chi_{135}$ & $\chi_{136}$ & $\chi_{145}$ & $\chi_{146}$ & $\chi_{156}$ \\
$+$ & $-$ & $+$ & $-$ & $+$ & $+$ & $-$ & $-$ & $+$ & $+$ \\
$\chi_{234}$ & $\chi_{235}$ & $\chi_{236}$ & $\chi_{245}$ & $\chi_{246}$ & $\chi_{256}$ & $\chi_{345}$ & $\chi_{346}$ & $\chi_{356}$ & $\chi_{456}$ \\
$+$ & $-$ & $-$ & $+$ & $+$ & $-$ & $-$ & $-$ & $-$ & $+$ \\
\bottomrule
\end{tabular}
\end{equation*}
The above sign pattern does not violate the constraints imposed by the feasibility and stability conditions in Pl\"ucker coordinates. 
We now examine the feasibility constraints \eqref{eq:feasibility_global} but rewrite the coordinates using Grassmann--Pl\"ucker relations,
\begin{equation*}\label{eq:reformFeasPluckRel1}
\begin{aligned}
    &p_{156} + p_{256} + p_{356} 
\\&\qquad=\frac{1}{p_{123}}(-p_{135}p_{126}+p_{136}p_{125}-p_{235}p_{126}+p_{236}p_{125}-p_{235}p_{136}+p_{135}p_{236})
\\&\qquad=\frac{1}{p_{123}}[(-p_{135}p_{126}+p_{135}p_{236})+(p_{136}p_{125}-p_{235}p_{126}+p_{236}p_{125}-p_{235}p_{136})]
\\&\qquad=  \frac{1}{p_{123}}[p_{135}(-p_{126}+p_{236})+(p_{136}p_{125}-p_{235}p_{126}+p_{236}p_{125}-p_{235}p_{136})]>0,
\end{aligned}
\end{equation*}
\begin{equation*}\label{eq:reformFeasPluckRel2}
    \begin{aligned}
        &p_{146} + p_{246} + p_{346} \\
&\qquad=\frac{1}{p_{123}}[p_{134}(-p_{126}+p_{236})+(p_{124}p_{136}-p_{234}p_{126}+p_{124}p_{236}-p_{234}p_{136})]<0.
    \end{aligned}
\end{equation*}
Under the sign assignments of $\sigma$, we have ${p_{136}p_{125}-p_{235}p_{126}+p_{236}p_{125}-p_{235}p_{136}<0}$, $p_{124}p_{136}-p_{234}p_{126}+p_{124}p_{236}-p_{234}p_{136}>0$,
so the above implies $-p_{126}+p_{236}>0$ and ${-p_{126}+p_{236}<0}$, a contradiction. Hence this ecology is impossible. Similar computations show that \emph{obligate cyclic predation} \cite{Pierre}, corresponding to the sign pattern\linebreak $\sigma=(-,-,-,+,-,+,+,+,-,-,+,+)$, is impossible. Together with the case of obligate mutualism in Example~\ref{ex:n_3} and \emph{competition with two obligate mutualists}~\cite{Pierre}, with the sign pattern $\sigma=(+,-,-,+,+,+,+,+,-,+,-,+)$, that are directly certified to be impossible by our code, these are the only four impossible ecologies for $n=3$. 
\end{example}


\section{Computations via \texttt{HypersurfaceRegions.jl}}
\label{Sec:HypersurfacesComp}

In this section, we study feasibility and stability by computing regions in the complement of a hypersurface arrangement. As discussed in Section~\ref{sec:RouthHurwitz}, the feasibility and stability conditions of the Lotka--Volterra system are polynomial inequalities in the parameters $(\ba, \bB)\in \Rn\times \Rnn.$ The feasibility conditions require $\feascond>0$, while stability is determined by the positivity of the Routh--Hurwitz polynomials. We consider the hypersurface arrangement that consists of the hypersurfaces defined by these polynomials i.e., the zero sets of the polynomials, together with the coordinate hyperplanes given by $a_i = 0, b_{ij}=0$ for $i,j\in [n]$. 

In~\cite{cummings2024smooth}, the authors introduced an algorithm that combines Morse theory with numerical solution of ODEs to compute the number of connected regions in a hypersurface arrangement, along with their Euler characteristics and a representative point from each region. This algorithm was later adapted and implemented in Julia as the package \texttt{HypersurfaceRegions.jl}~\cite{breiding2025computing}. 
We use this package to compute possible ecologies for $n=2,3,4$. Related work using methods from numerical algebraic geometry has recently appeared in~\cite{Cummings2025Routing}.

To simplify computations, we impose the conditions $b_{ii}=1$ for each $i\in [n]$. This assumption is without loss of generality since we always have $b_{ii}>0$. Moreover, replacing $\bB$ by $\bB \mathbf{D}$ for some diagonal matrix $\mathbf{D}$ with positive entries rescales the equilibrium $\mathbf{x}_\star$ to $\mathbf{D}^{-1}\mathbf{x}_\star$ maintaining its positivity, and transforms the corresponding Jacobian $\JacAB$ in~\eqref{eq:Jac} to $\mathbf{D}^{-1}\JacAB\mathbf{D}$, which preserves its eigenvalues.

We start with the case $n=2$. Without loss of generality, we set $b_{11} = b_{22} = 1$. Thus, there remain four free parameters, viz., $a_1, a_2, b_{12}, b_{21}$. The feasibility and stability conditions are then expressed by the positivity of the polynomials $a_1 - a_2 b_{12}$, $a_2 - a_1 b_{21}$, $a_1 + a_2 - a_1 b_{21} - a_2 b_{12}$, $1 - b_{12} b_{21}$. Using \texttt{HypersurfaceRegions.jl}, we compute the regions of the hypersurface arrangement defined by these four polynomials along with the coordinate hyperplanes given by $a_1 = 0$, $a_2 = 0$, $b_{12} = 0$, and $b_{21} = 0$. There are 72 regions in total, each contractible. 
Table~\ref{table:2by 2} summarizes the region counts by sign pattern and their feasibility–stability classification (F, S). 

Among these 72 regions, exactly 8 regions satisfy both feasibility and stability conditions. The corresponding sign patterns for $(a_1, a_2, b_{12}, b_{21})$ are
\[
\begin{aligned}
&[+, +, +, +], [-, +, -, -], [+, +, -, -], [+, +, +, -], \\
&[+, -, -, -], [-, +, -, +], [+, -, +, -], [+, +, -, +].
\end{aligned}
\]
Accounting for symmetry (i.e., permuting indices 1 and 2), these patterns yield five distinct equivalence classes of feasible and stable sign patterns, matching the result in~\cite{Pierre}. Additionally, we identify 2 regions with the non-trivially impossible sign pattern $[-,-,-,-]$ corresponding to obligate mutualism (Example~\ref{ex:n2}), and 32 regions whose sign patterns match one of the above feasible/stable patterns but fail to satisfy feasibility and/or stability conditions. This shows that, as expected, the sign pattern alone does not determine feasibility and stability. Note that while there are $10$ regions with sign pattern $[+,+,+,+]$, only one of them satisfies both conditions, illustrating the complexity of confirming feasibility and stability without full polynomial analysis.

\begin{table}
\centering
\begin{tabular}{ccccc}
\toprule
   sign of $(a_1,a_2,b_{12},b_{21})$  & F, S & F, not S & S, not F & not F, not S\\
\midrule
$[+,+,+,+]$ & 1 & 0 & 2 & 7\\
$[+,+,-,-]$ & 1 & 1 & 0 & 0\\
$[+,+,+,-]$ or $[+,+,-,+]$ & 1 & 0 & 0 & 2\\
$[+,-,+,-]$ or $[-,+,-,+]$ & 1 & 0 & 0 & 2\\
$[+,-,-,-]$ or $[-,+,-,-]$ & 1 & 1 & 1 & 5\\
$[-,-,-,-]$ & 0 & 0 & 0& 2\\
$[+,-,+,+]$ or $[-,+,+,+]$ & 0 & 0 & 1 & 3\\
$[+,-,-,+]$ or $[-,+,+,-]$ & 0 & 0 & 0 & 3\\
$[-,-,+,+]$ & 0 & 1 & 1 &8\\
$[-,-,+,-]$ or $[-,-,-,+]$ & 0 & 1 & 1 & 1\\
\bottomrule
\end{tabular}
\caption{Number of regions classified by feasibility (F) and stability (S) for $n=2$, classified using \texttt{HypersurfaceRegions.jl}.}
\label{table:2by 2}
\end{table}

Next is the case $n=3$. Without loss of generality, we set $b_{11}=b_{22}=b_{33}=1$, leaving nine free variables: $a_1,a_2,a_3$ and $b_{12},b_{13},b_{21},b_{23},b_{31},b_{32}$.
From the feasibility and stability conditions, we obtain six polynomial constraints of degrees $3,3,3,3,8,3$. In principle, one could compute the regions defined by the hypersurface arrangement formed by these six polynomials together with the nine coordinate hyperplanes defined by the free variables. However, the computational complexity of this task is prohibitive: the number of critical points in these arrangements typically scales as $O(d^n)$, where $d$ is the total degree of the hypersurfaces and $n$ is the number of variables. In our setting, this leads to an estimated $O(10^{12})$ critical points, beyond current computational limits. To circumvent this difficulty, we instead fix the equilibrium values $\mathbf{x}_\star$ to be $(1,1,1)$. Under this assumption, feasibility is automatically satisfied, and our free variables reduce to $b_{12},b_{13},b_{21},b_{23},b_{31},b_{32}$, with $\ba=\bB\mathbf{x}_\star$. We then only need to consider the three polynomial constraints arising from stability conditions. Using the package \texttt{HypersurfaceRegions.jl}, we compute these regions in approximately three minutes, obtaining 1207 distinct regions. Among these, 343 regions correspond to feasible and stable solutions. In~\cite{Pierre}, the authors report six \emph{irreducible ecologies} (for which any two-species subecology is impossible or trivial), for which feasible and stable steady states are found very low probability under random sampling. In our computations, we realize all six irreducible ecologies, which together account for 30 out of the 343 feasible stable regions. This highlights the benefit of our deterministic region-computation approach over traditional random sampling methods. We provide explicit sample points corresponding to these irreducible ecologies in Table~\ref{table: sample points three by three}.

\begin{table}[h!]
\centering
\begin{tabular}{cc}
\toprule
Sign pattern of $({\bf a}, \bB)$ & Sample point \\
\hline
$[+,–,–,+,–,+,–,+,–]$ & \begin{tabular}{@{}c@{}}$[0.11,\ -0.056,\ -1.966,\ 2.949,$\\$ -3.84,\ 0.897,\ -1.954,\ 3.996,\ -6.962]$\end{tabular} \\
\hline
$[–,–,–,+,–,+,–,–,+]$ & \begin{tabular}{@{}c@{}}$[-0.361,\ -2.199,\ -0.453,\ 0.369,$\\$ -1.731,\ 0.934,\ -4.133,\ -2.816,\ 1.363]$\end{tabular} \\
\hline
$[–,–,–,+,–,–,–,–,+]$ & \begin{tabular}{@{}c@{}}$[-0.306,\ -3.197,\ -0.365,\ 0.237,$\\$ -1.543,\ -1.196,\ -3.001,\ -2.28,\ 0.915]$\end{tabular} \\
\hline
$[–,–,–,+,–,+,–,–,–]$ & \begin{tabular}{@{}c@{}}$[-0.126,\ -1.451,\ -0.222,\ 0.464,$\\$-1.59,\ 0.491,\ -2.942,\ -1.095,\ -0.127]$\end{tabular} \\
\hline
$[–,–,–,+,–,–,+,–,–]$ & \begin{tabular}{@{}c@{}}$[-0.199,\ -2.153,\ -0.204,\ 0.771,$\\$ -1.97,\ -3.547,\ 0.394,\ -1.081,\ -0.123]$\end{tabular} \\
\hline
$[–,–,–,+,–,–,–,–,–]$ & \begin{tabular}{@{}c@{}}$[-0.193,\ -3.158,\ -0.394,\ 0.57,$\\$ -1.763,\ -1.498,\ -2.66,\ -1.228,\ -0.166]$\end{tabular} \\
\bottomrule
\end{tabular}
\caption{Sample points for irreducible ecologies~\cite{Pierre} with $n=3$ obtained via region computations using \texttt{HypersurfaceRegions.jl}.}
\label{table: sample points three by three}
\end{table}

Finally we turn to the case $n=4$. As in the previous cases, we set the diagonal entries $b_{11}=b_{22}=b_{33}=b_{44}=1$, leaving 16 free variables: the four entries of $\ba$ and the 12 off-diagonal entries of the matrix $\bB$. Our hypersurface arrangement consists of eight polynomial constraints arising from the feasibility and stability conditions, together with 16 hyperplanes corresponding to the variables. To reduce complexity, we first fix the equilibrium $\xstar$ to a positive vector, as in the case $n = 3$. However, even after this reduction, the remaining $12$ free parameters make a complete region computation infeasible. We therefore introduce an additional simplification: we fix the signs of several off-diagonal entries of $\bB$ in advance. Rather than computing all regions of the arrangement, we restrict attention to verifying the realizability of specific sign patterns. The computation performed by \texttt{HypersurfaceRegions.jl} proceeds in two stages: First, it computes critical points of a suitable rational function chosen so that each region of the arrangement contains at least one critical point. Second, it tracks paths between these critical points to determine region membership. For the task of verifying the possibility of a particular sign pattern, we can restrict the computation to the first step.
We thus randomly select a positive $\xstar$, randomly choose and fix four off-diagonal entries of $\bB$ with predetermined signs, and then numerically compute critical points in one minute using numerical algebraic geometry. 
Repeating this procedure 50 times, we can successfully identify one ecology corresponding to the ordered sign pattern 
\[
[+,+,-,-,\,+,+,+,+,\,+,+,+,+,\,-,+,+,-].
\]
reported in the first plot of \cite[Supplementary Fig.~2]{Pierre}. This ecology occurs with very low probability under random sampling. A representative sample point for this irreducible ecology is given by
\[
\begin{aligned}
&[14.834,\,4.38,\,-0.033,\,-0.019,\,0.847,\,11.543,\,1.444,\,0.601,\\
&\phantom{[}0.117,\,2.662,\,0.003,\,0.589,\,-1.625,\,0.064,\,0.005,\,-1.088].
\end{aligned}
\] \\

\noindent\textbf{Acknowledgements.}
We thank Heather Harrington for guidance on research related to the Lotka--Volterra systems, Vincenzo Galgano for discussions on Grassmannians,  Oskar Henriksson for insights into the theory of chemical reaction networks, Guilherme Almeida for discussions on connections with integrable systems, and Irem Portakal for discussions on perspectives from game theory. We are grateful to Daniela Egas Santander for bringing to our attention the Lotka--Volterra type rate-firing models \cite{CurtoMorrison2023}.

\bigskip
\noindent {\bf Authors' addresses:}\\
\noindent T\"urk\"u \"Ozl\"um \c{C}elik, Max Planck Institute of Molecular Cell Biology and Genetics \\ (MPI-CBG) \& Center for Systems Biology Dresden (CSBD)
\hfill  {\tt celik@mpi-cbg.de}\\ 
\noindent Pierre A. Haas, Max Planck Institute for the Physics of Complex Systems (MPI-PKS) \& MPI-CBG \& CSBD
\hfill  {\tt haas@pks.mpg.de}\\
\noindent Georgy Scholten, MPI-CBG \& CSBD
\hfill  {\tt scholten@mpi-cbg.de}\\
\noindent Kexin Wang, Harvard University
\hfill  {\tt kexin\_wang@g.harvard.edu}\\
\noindent Gulio Zucal, MPI-CBG \& MPI-PKS \& CSBD
\hfill  {\tt zucal@mpi-cbg.de}

\end{document}